\begin{document}
    \title{New SAV-pressure correction  methods  for the Navier-Stokes equations: stability and error analysis
\thanks{The work of X. Li is supported by the National Natural Science Foundation of China  grants  11901489, 11971407 and Postdoctoral Science Foundation of China under grant numbers BX20190187 and 2019M650152. The work of J. Shen is supported in part by NSF grant DMS-1720442 and AFOSR  grant FA9550-16-1-0102.}}

    \author{ Xiaoli Li
        \thanks{School of Mathematical Sciences and Fujian Provincial Key Laboratory on Mathematical Modeling and High Performance Scientific Computing, Xiamen University, Xiamen, Fujian, 361005, China. Email: xiaolisdu@163.com}.
        \and Jie Shen 
         \thanks{Corresponding Author. Department of Mathematics, Purdue University, West Lafayette, IN 47907, USA. Email: shen7@purdue.edu}.
         \and Zhengguang Liu
        \thanks{School of Mathematics and Statistics, Shandong Normal University, Jinan, Shandong, 250358, China. Email: liuzhgsdu@yahoo.com}.
}

\maketitle

\begin{abstract}
 We construct new first- and second-order pressure correction schemes using the scalar auxiliary variable (SAV) approach  for the Navier-Stokes equations. These schemes are linear, decoupled and only require a sequence of solving Poisson type equations at each time step. Furthermore, they are unconditionally energy stable. We also establish rigorous error estimates for the velocity and pressure approximation of the first-order scheme without any condition on the time step.  
\end{abstract}

 \begin{keywords}
Navier-Stokes; pressure-correction; scalar auxiliary variable (SAV);  energy stability; error estimates
 \end{keywords}
   \begin{AMS}
35Q30, 65M12, 65J15.
    \end{AMS}
\markboth{XIAOLI LI, JIE SHEN AND ZHENGGUANG LIU} {Error estimates  for the SAV
approach for the Navier-Stokes equations}
%==================================================================
 \section{Introduction}
 We consider  numerical approximation of the time-dependent incompressible Navier-Stokes equations
  \begin{subequations}\label{e_model}
    \begin{align}
     \frac{\partial \textbf{u}}{\partial t}+\textbf{u}\cdot \nabla\textbf{u}
     -\nu\Delta\textbf{u}+\nabla p=\textbf{f}
     \quad &\ in\ \Omega\times J,
      \label{e_modelA}\\
      \nabla\cdot\textbf{u}=0
      \quad &\ in\ \Omega\times J,
      \label{e_modelB}\\
     \textbf{u}=\textbf{0} \quad &\ on\ \partial\Omega\times J,
      \label{e_modelC}
    \end{align}
  \end{subequations}
where $\Omega$ is an open bounded domain in $\mathbb{R}^2$ with a sufficiently smooth boundary $\partial \Omega$, $J=(0,T]$, $(\textbf{u},p)$ represent the unknown   velocity and pressure,  $\textbf{f}$ is an external body force,
 $\nu>0$ is the  viscosity coefficient and $\textbf{n}$ is the unit outward normal  of the domain $\Omega$. 
 
 The above system is one of the most fundamental system in mathematical and physical science.
  Its numerical approximations   plays an eminent role in many branches of science and engineering, and an enormous amount of work have been devoted to the design and  analysis of numerical schemes for its approximation,  see, for instance, \cite{girault1979finite,temam2001navier,glowinski2003finite,gunzburger2012finite} and the references therein. 
  
  Two of the main difficulties in numerically solving Navier-Stokes equations are: (i)   the coupling of velocity and pressure by the incompressible condition $\nabla\cdot \textbf{u}=0$; and (ii)  the treatment of nonlinear term. There are essentially two classes of numerical approaches to deal with the incompressible constraint: the coupled approach and the decoupled approach. The coupled approach  requires solving a saddle point problem at each time step so it could be computationally expensive for dynamical simulations although many efficient solution techniques are available \cite{girault1979finite,brezzi2012mixed,elman2014finite}. The decoupled approach, all originated from the so called projection method \cite{chorin1968numerical,temam1969approximation,guermond2006overview}, leads to a sequence of Poisson type equations to solve at each time step, assuming that the nonlinear term is treated explicitly, hence it can be extremely efficient, particularly for dynamical simulations using finite difference or spectral methods.

 From a computational point of view, it is desirable to be able to treat the nonlinear term explicitly so that one only needs to solve simple linear equations with constant coefficients at each time step. This is specially beneficial if a decoupled approach is used so one only needs to solve a sequence of Poisson type equations to solve at each time step. However, such an explicitly treatment usually leads to  a stability constraint on the time step. To the best of the authors' knowledge, apart from the recently developed schemes  \cite{lin2019numerical} based on the scalar auxiliary variable (SAV) approach \cite{shen2018scalar,shen2017new}, there were no schemes with explicit treatment of nonlinear term that were unconditionally energy diminishing, an important property satisfied by the exact solution of the Navier-Stokes equations. 
 We mention however that it is possible to prove that the numerical solution of a semi-implicit scheme remains to be bounded (but not  energy diminishing) assuming the time step is sufficiently small, but independent of spatial discretization size, see for instance \cite{weinan1995projection,HeSu07}. 
In a recent work \cite{lin2019numerical}, Dong et al. constructed  the following scheme: 
Find ($\textbf{u}^{n+1}$, $p^{n+1}$, $q^{n+1}$) by solving
  \begin{numcases}{}
   \frac{{\textbf{u}}^{n+1}-\textbf{u}^{n}}{\Delta t}+\frac{q^{n+1}}{\sqrt{E(\textbf{u}^{n})+C_0}}\textbf{u}^{n}\cdot \nabla\textbf{u}^{n}
     -\nu\Delta{\textbf{u}}^{n+1}     +\nabla p^{n+1}=0, \ \ \textbf{u}^{n+1}|_{\partial \Omega}=0; \label{e_model_semi_original1}\\
    \nabla\cdot\textbf{u}^{n+1}=0,  \label{e_model_semi_original3}\\ 
   2q^{n+1}\frac{q^{n+1}-q^n}{\Delta t}=(\frac{\textbf{u}^{n+1}-\textbf{u}^{n}}{\Delta t}+ 
   \frac{q^{n+1}}{\sqrt{E(\textbf{u}^{n})+C_0}}(\textbf{u}^n\cdot\nabla) \textbf{u}^n,{\textbf{u}}^{n+1}), \label{e_model_semi_original4} 
\end{numcases}
where $E(\textbf{u})=\int_{\Omega}\frac{1}{2}|\textbf{u}|^2$ is the total energy. It is shown in \cite{lin2019numerical} that
the above scheme satisfies the following property:
\begin{equation}\label{energy_stable}
 |q^{n+1}|^2-|q^n|^2\le -\nu\|\nabla \textbf{u}^{n+1}\|_{L^2(\Omega)}^2,\quad \forall n\ge 0.
\end{equation}
Since $q^{n}$ is an approximation of the energy $E(\textbf{u}(t^n))$, the above scheme is unconditionally energy stable with a modified energy.
It can be shown that the above scheme reduces to two generalized Stokes equations (with constant coefficient) plus a nonlinear algebraic equation for the auxiliary variable $q^{n+1}$ at each time step. So the scheme is essentially as efficient as the usual semi-implicit scheme without the auxiliary variable. Moreover, one can also adopt a pressure-correction strategy so that the two generalized Stokes equations at each time step can be replaced by  a sequence of Poisson-type equations.
Ample numerical results presented in \cite{lin2019numerical} shown that the above scheme is  more efficient and robust than the usual semi-implicit schemes. However, there are also some theoretical and practical issues: (i) It only provides a bound for the scalar sequence $\{q^n\}$ which is intended as an approximation of the energy $E(\textbf{u})$ but with no direct relation in the discrete case. 
(ii) The scheme requires solving a nonlinear algebraic equation. 
 Hence, it is very difficult to shown that  the nonlinear algebraic equation always has a  real positive solution and  to derive an error estimate based just on \eqref{energy_stable}. 
 
The main purpose of this paper is to construct  new SAV schemes for the Navier-Stokes equations and to carry out a rigorous error analysis. Our main contribution are:
\begin{itemize} 
\item We  construct  new SAV schemes with first-order pressure-correction and second-order rotational pressure-correction. The new schemes enjoy  the following additional advantages: (i) it is  purely linear so it does not require solving nonlinear algebraic equation; (ii) it provides better stability: instead of \eqref{energy_stable}, our first-order scheme satisfies 
{\color{black}
\begin{equation*}\label{energy_stable2}
(\|\textbf{u}^{n+1}\|_{L^2(\Omega)}^2+ |q^{n+1}|^2)-(\|\textbf{u}^{n}\|_{L^2(\Omega)}^2+|q^n|^2)\le -2 \nu\|\nabla \textbf{u}^{n+1}\|_{L^2(\Omega)}^2,\quad \forall n\ge 0, \end{equation*}
}
where the extra term $\|\textbf{u}^{n+1}\|_{L^2(\Omega)}^2$  is essential  to carry out an error analysis;
(iii) it is coupled with a pressure-correction strategy \cite{van1986second,guermond2004error} so only Poisson-type equations need to be solved at each time step.  
\item 
We prove our new second-order scheme  based on  the second-order rotational pressure-correction is unconditionally energy stable. Note that
the energy stability of second-order rotational pressure-correction schemes has  been proved only for the  time dependent Stokes equations \cite{guermond2004error,CheS20},  its energy stability 
for the Navier-Stokes equations has been open with any kind of treatment for the nonlinear terms.
To the best of our knowledge, these are the first purely linear schemes for Navier-Stokes equations with explicit treatment of nonlinear terms with proven  unconditional energy stability.   
\item We carry out a rigorous error analysis for our first-order scheme and derive optimal error estimates for the velocity and pressure without any restriction on the time step.
\end{itemize}

%\cite{chorin1968numerical,temam1969approximation,shen1992error,shen1992penalty,shen1996error,guermond2004error}, which is an effective and efficient approach to decouple the velocity and pressure.

The paper is organized as follows. In Section 2,  we provide some preliminaries. In Section 3, we present 
first- and second-order pressure correction projection schemes based on the SAV approach, and describe the solution procedure. In Section 4, we derive the unconditional energy stability for both first- and second-order schemes. In Section 5, we carry out a rigorous error analysis to establish  for the first-order SAV pressure-correction scheme. Numerical experiments  are presented in Section 6 to  validate our theoretical results.

%==================================================================
%==================================================================
  \section{Preliminaries}
We describe below some notations and results which will be frequently used in this paper.

Throughout the paper, we use $C$, with or without subscript, to denote a positive
constant, which could have different values at different appearances.

 Let $\Omega$ be an open bounded domain in $\mathbb{R}^2$, we will use the standard notations $L^2(\Omega)$, $H^k(\Omega)$ and $H^k_0(\Omega)$ to denote the usual Sobolev spaces over $\Omega$. The norm corresponding to $H^k(\Omega)$ will be denoted simply by $\|\cdot\|_k$. In particular, we use $\|\cdot\|$ to denote the norm in $L^2(\Omega)$. Besides, $(\cdot,\cdot)$ is used to denote the inner product in $L^2(\Omega)$. The vector functions and vector spaces will be indicated by boldface type.
  
  We define   
  $$\textbf{H}=\{ \textbf{v}\in \textbf{L}^2(\Omega): div\textbf{v}=0, \textbf{v}\cdot \textbf{n}|_{\Gamma}=0 \},\ \ \textbf{V}=\{\textbf{v}\in H^1_0(\Omega):  div\textbf{v}=0 \},$$
  and the Stokes operator
  $$ A\textbf{u}=-P_{H}\Delta\textbf{u},\ \ \forall \ \textbf{u}\in D(A)=\textbf{H}^2(\Omega)\cap\textbf{V},$$
where $P_{H}$ is the orthogonal projector in $\textbf{L}^2(\Omega)$ onto $\textbf{H}$ and the Stokes operator $A$ is an unbounded positive self-adjoint closed operator in $\textbf{H}$ with domain $D(A)$.
% and its inverse $A^{-1}$ is compact in $\textbf{H}$

 Let us recall the following inequalities which will be used in the sequel \cite{temam2001navier,heywood1982finite}: 
\begin{equation}\label{e_norm H2}
\aligned
\|\nabla\textbf{v}\|\leq c_1\|A^{\frac{1}{2}}\textbf{v}\|,\ \ \|\Delta\textbf{v}\|\leq c_1\|A\textbf{v}\|, \ \ \forall \textbf{v}\in D(A)=\textbf{H}^2(\Omega)\cap\textbf{V}.
\endaligned
\end{equation} 
We then derive from the above and  Poincar\'e inequality that
\begin{equation}\label{e_norm H1}
\aligned
\|\textbf{v}\|\leq c_1\|\nabla\textbf{v}\|, \ \forall\textbf{v}\in \textbf{H}^1_0(\Omega),\ \ 
\|\nabla\textbf{v}\|\leq c_1\|A\textbf{v}\|, \ \ \forall \textbf{v}\in D(A),
\endaligned
\end{equation}
where $c_1$ is a positive constant depending only on $\Omega$.
% *********************************************************************************************  
%\begin{equation}\label{e_norm H-1**}
%\aligned
%\|A^{-1}\textbf{v}\|_s^2\leq c_1\|\textbf{v}\|^2_{s-2}, \ \forall\textbf{v}\in \textbf{H},\ s=1,2.
%\endaligned
%\end{equation} 
%\begin{equation}\label{e_norm H-1 equivalent**}
%\aligned
%c_2\|\textbf{v}\|_{-1}^2\leq (A^{-1}\textbf{v},\textbf{v})\leq c_3\|\textbf{v}\|_{-1}^2,\ \ \forall\textbf{v}\in \textbf{H},
%\endaligned
%\end{equation}
% where $c_i$ $(i=1,2,3)$ are positive constants depending only on $\Omega$.

% *********************************************************************************************
Next we define the trilinear form $b(\cdot,\cdot,\cdot)$ by
\begin{equation*}
\aligned
b(\textbf{u},\textbf{v},\textbf{w})=\int_{\Omega}(\textbf{u}\cdot\nabla)\textbf{v}\cdot \textbf{w}d\textbf{x}.
\endaligned
\end{equation*}
We can easily obtain that the trilinear form $b(\cdot,\cdot,\cdot)$ is a skew-symmetric with respect to its last two arguments, i.e., 
\begin{equation}\label{e_skew-symmetric1}
\aligned
b(\textbf{u},\textbf{v},\textbf{w})=-b(\textbf{u},\textbf{w},\textbf{v}),\ \ \forall \textbf{u}\in \textbf{H}, \ \ \textbf{v}, \textbf{w}\in \textbf{H}^1_0(\Omega),
\endaligned
\end{equation}
and 
\begin{equation}\label{e_skew-symmetric2}
\aligned
b(\textbf{u},\textbf{v},\textbf{v})=0,\ \ \forall \textbf{u}\in \textbf{H}, \ \ \textbf{v}\in \textbf{H}^1_0(\Omega).
\endaligned
\end{equation}
By using a combination of integration by parts, Holder's inequality, and Sobolev inequalities, we can obtain that for $d=2,3$, we have \cite{Tema95,shen1992error}
\begin{flalign}\label{e_estimate for trilinear form}
b(\textbf{u},\textbf{v},\textbf{w})\leq \left\{
   \begin{array}{l}
   c_2\|\textbf{u}\|_1\|\textbf{v}\|_1\|\textbf{w}\|_1,\ \ \forall \textbf{u}, \textbf{v} \in \textbf{H}
   , \textbf{w}\in \textbf{H}^1_0(\Omega),\\
   c_2\|\textbf{u}\|_2\|\textbf{v}\|\|\textbf{w}\|_1, \ \ \forall \textbf{u}\in \textbf{H}^2(\Omega)\cap\textbf{H},\ \textbf{v} \in \textbf{H}, \textbf{w}\in \textbf{H}^1_0(\Omega),\\
   c_2\|\textbf{u}\|_2\|\textbf{v}\|_1\|\textbf{w}\|, \ \ \forall \textbf{u}\in \textbf{H}^2(\Omega)\cap\textbf{H},\ \textbf{v} \in \textbf{H}, \textbf{w}\in \textbf{H}^1_0(\Omega),\\
   c_2\|\textbf{u}\|_1\|\textbf{v}\|_2\|\textbf{w}\|, \ \ \forall \textbf{v}\in \textbf{H}^2(\Omega)\cap\textbf{H},\ \textbf{u}\in \textbf{H}, \textbf{w}\in \textbf{H}^1_0(\Omega),\\
   c_2\|\textbf{u}\|\|\textbf{v}\|_2\|\textbf{w}\|_1, \ \ \forall \textbf{v}\in \textbf{H}^2(\Omega)\cap\textbf{H},\ \textbf{u} \in \textbf{H}, \textbf{w} \in \textbf{H}^1_0(\Omega),\\
   c_2\|\textbf{u}\|_1^{1/2}\|\textbf{u}\|^{1/2}\|\textbf{v}\|_1^{1/2}\|\textbf{v}\|^{1/2}\|\textbf{w}\|_1,\ \ \forall \textbf{u}, \textbf{v} \in \textbf{H},  \textbf{w}\in \textbf{H}^1_0(\Omega),
   \end{array}
   \right.
\end{flalign}
where $c_2$ is a positive constant depending only on $\Omega$. 
%Besides, in the space dimension two, we have
%\begin{flalign}\label{e_dimension two}
%b(\textbf{u},\textbf{v},\textbf{w})\leq \left\{
%   \begin{array}{l}
%   c_2\|\textbf{u}\|_1\|\textbf{v}\|^{1/2}\|\textbf{v}\|_2^{1/2}\|\textbf{w}\|, \ \ \forall \textbf{v}\in \textbf{H}^2\cap\textbf{H}^1_0(\Omega),\ \textbf{u}, \textbf{w}\in \textbf{H}^1_0(\Omega),\\
%   c_2\|\textbf{u}\|^{1/2}\|\textbf{u}\|_1^{1/2}\|\textbf{v}\|_1^{1/2}\|\textbf{v}\|_2^{1/2}\|\textbf{w}\|, \ \ \forall \textbf{v}\in \textbf{H}^2\cap\textbf{H}^1_0(\Omega),\ \textbf{u}, \textbf{w}\in \textbf{H}^1_0(\Omega),\\
%   c_2\|\textbf{u}\|_1^{1/2}\|\textbf{u}\|^{1/2}\|\textbf{v}\|_1^{1/2}\|\textbf{v}\|^{1/2}\|\textbf{w}\|_1,\ \ \forall \textbf{u}, \textbf{v}, \textbf{w}\in \textbf{H}^1_0(\Omega)
%   \end{array}
%   \right.
%\end{flalign}

We will frequently use the following discrete version of the Gronwall lemma \cite{shen1990long,HeSu07}:

\medskip
\begin{lemma} \label{lem: gronwall2}
Let $a_k$, $b_k$, $c_k$, $d_k$, $\gamma_k$, $\Delta t_k$ be nonnegative real numbers such that
\begin{equation}\label{e_Gronwall3}
\aligned
a_{k+1}-a_k+b_{k+1}\Delta t_{k+1}+c_{k+1}\Delta t_{k+1}-c_k\Delta t_k\leq a_kd_k\Delta t_k+\gamma_{k+1}\Delta t_{k+1}
\endaligned
\end{equation}
for all $0\leq k\leq m$. Then
 \begin{equation}\label{e_Gronwall4}
\aligned
a_{m+1}+\sum_{k=0}^{m+1}b_k\Delta t_k \leq \exp \left(\sum_{k=0}^md_k\Delta t_k \right)\{a_0+(b_0+c_0)\Delta t_0+\sum_{k=1}^{m+1}\gamma_k\Delta t_k \}.
\endaligned
\end{equation}
\end{lemma}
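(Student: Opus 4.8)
The plan is to sum the hypothesis \eqref{e_Gronwall3} over the running index, exploit its telescoping structure, and thereby reduce the claim to the standard summation form of the discrete Gronwall inequality.

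First I would fix $n$ with $0\le n\le m$ and add \eqref{e_Gronwall3} for $k=0,1,\dots,n$. The differences $a_{k+1}-a_k$ telescope to $a_{n+1}-a_0$, and --- this is the observation that makes the $c_k$-terms harmless --- the differences $c_{k+1}\Delta t_{k+1}-c_k\Delta t_k$ telescope to $c_{n+1}\Delta t_{n+1}-c_0\Delta t_0$; since $c_{n+1}\Delta t_{n+1}\ge 0$ it can simply be discarded from the left-hand side, leaving only the single term $c_0\Delta t_0$ on the right. Likewise $\sum_{k=0}^{n}b_{k+1}\Delta t_{k+1}=\sum_{k=1}^{n+1}b_k\Delta t_k$, and adding the nonnegative quantity $b_0\Delta t_0$ to both sides completes this to $\sum_{k=0}^{n+1}b_k\Delta t_k$. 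After enlarging $\sum_{k=1}^{n+1}\gamma_k\Delta t_k$ to $\sum_{k=1}^{m+1}\gamma_k\Delta t_k$, this produces
\[
a_{n+1}+\sum_{k=0}^{n+1}b_k\Delta t_k\le B+\sum_{k=0}^{n}d_k\Delta t_k\,a_k,\qquad 0\le n\le m,
\]
where $B:=a_0+(b_0+c_0)\Delta t_0+\sum_{k=1}^{m+1}\gamma_k\Delta t_k$ is exactly the bracketed quantity on the right-hand side of \eqref{e_Gronwall4}.

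Next I would set $y_n:=a_n+\sum_{k=0}^{n}b_k\Delta t_k$. Because every $b_k\Delta t_k$ is nonnegative, $a_k\le y_k$, so the inequality above becomes the self-contained recursion $y_{n+1}\le B+\sum_{k=0}^{n}d_k\Delta t_k\,y_k$ for $0\le n\le m$, together with the base bound $y_0=a_0+b_0\Delta t_0\le B$. An induction on $n$ then yields $y_n\le B\prod_{k=0}^{n-1}(1+d_k\Delta t_k)$; the inductive step uses only the elementary identity $\sum_{j=0}^{n}d_j\Delta t_j\prod_{k=0}^{j-1}(1+d_k\Delta t_k)=\prod_{k=0}^{n}(1+d_k\Delta t_k)-1$, which follows by telescoping $\prod_{k=0}^{j}(1+d_k\Delta t_k)-\prod_{k=0}^{j-1}(1+d_k\Delta t_k)$. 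Finally $1+x\le e^{x}$ gives $\prod_{k=0}^{m}(1+d_k\Delta t_k)\le\exp(\sum_{k=0}^{m}d_k\Delta t_k)$, so taking $n=m+1$ converts $y_{m+1}\le B\prod_{k=0}^{m}(1+d_k\Delta t_k)$ into precisely \eqref{e_Gronwall4}.

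I do not anticipate a genuine obstacle here: the statement is classical and the proof is, in essence, careful bookkeeping. The only points demanding attention are keeping the index ranges of the $b_k$- and $\gamma_k$-sums correct after telescoping, and invoking the nonnegativity hypotheses in the right places --- to discard $c_{n+1}\Delta t_{n+1}$, to pad the $b$-sum with $b_0\Delta t_0$, to enlarge the $\gamma$-sum, and, above all, to replace $a_k$ by $y_k$ so that the recursion closes in a single sequence.
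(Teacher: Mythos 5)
Your argument is correct: the telescoping of the $a$- and $c$-differences, the padding of the $b$-sum with $b_0\Delta t_0$, the reduction to the single sequence $y_n=a_n+\sum_{k=0}^{n}b_k\Delta t_k$, and the product-form induction with $1+x\le e^{x}$ all check out and deliver exactly \eqref{e_Gronwall4}. The paper itself does not prove Lemma \ref{lem: gronwall2} but cites it from the literature, and your proof is essentially the standard argument given in those references, so there is nothing substantive to compare beyond noting that your write-up fills in the omitted details correctly.
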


%==================================================================
  \section{The pressure-correction schemes based on the SAV approach}
 In this section, we construct the first- and second-order pressure-correction schemes based on the SAV approach for the Navier-Stokes equations.
 
Set $$\Delta t=T/N,\ t^n=n\Delta t, \ d_t g^{n+1}=\frac{g^{n+1}-g^n}{\Delta t},
\ {\rm for} \ n\leq N,$$
 and define a scalar function
 \begin{equation}\label{e_definition of q}
\aligned
q(t)=\rm{exp} (-\frac{t}{T}).
\endaligned
\end{equation} 
This function will serve as the scalar auxiliary variable (SAV).
Then, we rewrite the governing system into the following equivalent form:
  \begin{numcases}{}
 \frac{\partial \textbf{u}}{\partial t}+\frac{q(t)}{\rm{exp}(-\frac{t}{T} )}\textbf{u}\cdot \nabla\textbf{u}
     -\nu\Delta\textbf{u}+\nabla p=\textbf{f},  \label{e_model_transform1}\\
  \frac{\rm{d} q}{\rm{d} t}=-\frac{1}{T}q+\frac1{\rm{exp}(-\frac{t}{T} )}\int_{\Omega}\textbf{u}\cdot \nabla\textbf{u}\cdot \textbf{u}d\textbf{x},     \label{e_model_transform2}\\
 \nabla\cdot\textbf{u}=0. \label{e_model_transform3}
\end{numcases}
Note that the last term in \eqref{e_model_transform2} is zero thanks to \eqref{e_skew-symmetric2}.  {\color{black} This term is added to balance the nonlinear term in  \eqref{e_model_transform1} } in the discretized case. It is clear that the above system is equivalent to the original system.  
We construct below linear, decoupled, first-order and second-order pressure-correction schemes for the above system.

\textbf{Scheme \uppercase\expandafter{\romannumeral 1} (first-order accuracy):} The first-order semi-discrete version of the pressure-correction method can be written as follows: Find ($\tilde{\textbf{u}}^{n+1}, \textbf{u}^{n+1}, p^{n+1}, q^{n+1}$) by solving
  \begin{equation}
   \frac{\tilde{\textbf{u}}^{n+1}-\textbf{u}^{n}}{\Delta t}+\frac{q^{n+1}}{\rm{exp}( -\frac{t^{n+1}}{T} )}\textbf{u}^{n}\cdot \nabla\textbf{u}^{n}
     -\nu\Delta\tilde{\textbf{u}}^{n+1}     +\nabla p^{n}=\textbf{f}^{n+1}, \ \ \tilde{\textbf{u}}^{n+1}|_{\partial \Omega}=0; \label{e_model_semi1}
     \end{equation}
     \begin{eqnarray}
  &&   \frac{\textbf{u}^{n+1}-\tilde{\textbf{u}}^{n+1}}{\Delta t}+\nabla(p^{n+1}-p^n)=0; \label{e_model_semi2}\\
 &&    \nabla\cdot\textbf{u}^{n+1}=0, \ \ \textbf{u}^{n+1}\cdot \textbf{n}|_{\partial \Omega}=0;
 \label{e_model_semi3}
 \end{eqnarray}
 \begin{equation}
   \frac{q^{n+1}-q^n}{\Delta t}=-\frac{1}{T}q^{n+1}+\frac{1}{\rm{exp}(  -\frac{t^{n+1}}{T} )}
(\textbf{u}^n\cdot\nabla \textbf{u}^n,\tilde{\textbf{u}}^{n+1}). \label{e_model_semi4} 
\end{equation}
 We now describe how to solve the semi-discrete-in-time scheme
 \eqref{e_model_semi1}-\eqref{e_model_semi4} efficiently.  We denote $S^{n+1}=
\exp ( \frac{t^{n+1}}{T}) q^{n+1} $ and set
  \begin{numcases}{}
  \tilde{\textbf{u}}^{n+1}=\tilde{\textbf{u}}_1^{n+1}+S^{n+1}\tilde{\textbf{u}}_2^{n+1},\label{e_implementation_1}\\
   \textbf{u}^{n+1}=\textbf{u}_1^{n+1}+S^{n+1}\textbf{u}_2^{n+1},\label{e_implementation_2}\\ 
   p^{n+1}=p_1^{n+1}+S^{n+1}p_2^{n+1}. \label{e_implementation_3}
\end{numcases}
% *********************************************************************************************
\begin{comment}
 We first solve $\tilde{\textbf{u}}_i^{n+1}$ $(i=1,2)$ by
   \begin{numcases}{}
\frac{\tilde{\textbf{u}}_1^{n+1}-\textbf{u}^{n}}{\Delta t}=\nu\Delta \tilde{\textbf{u}}_1^{n+1}-\nabla p^n+\textbf{f}^{n+1}, \ \ \tilde{\textbf{u}}_1^{n+1}|_{\partial \Omega}=0;
\label{e_implementation_4}\\
\frac{\tilde{\textbf{u}}_2^{n+1}}{\Delta t}+\textbf{u}^{n}\cdot \nabla\textbf{u}^{n}=\nu\Delta \tilde{\textbf{u}}_2^{n+1},  \ \ \ \tilde{\textbf{u}}_2^{n+1}|_{\partial \Omega}=0.
\label{e_implementation_5}
\end{numcases}
Then we solve $\textbf{u}_{i}^{n+1}$, $p_i^{n+1} $ $(i=1,2)$ by 
   \begin{numcases}{}
\frac{\textbf{u}_1^{n+1}-\tilde{\textbf{u}}_1^{n+1}}{\Delta t}+\nabla (p_1^{n+1}-p^n)=0,
\label{e_implementation_6}\\
\frac{\textbf{u}_2^{n+1}-\tilde{\textbf{u}}_2^{n+1}}{\Delta t}+\nabla p_2^{n+1}=0,
\label{e_implementation_6**}\\
\nabla\cdot \textbf{u}_i^{n+1}=0,\ \ \ \textbf{u}_i^{n+1}\cdot \textbf{n}|_{\partial \Omega}=0,
\label{e_implementation_7}
\end{numcases} 
\end{comment}
% *********************************************************************************************
 Plugging  \eqref{e_implementation_1} in the scheme \eqref{e_model_semi1}-\eqref{e_model_semi4}, we find  that $\tilde{\textbf{u}}_i^{n+1}$ $(i=1,2)$ 
 satisfy 
   \begin{numcases}{}
\frac{\tilde{\textbf{u}}_1^{n+1}-\textbf{u}^{n}}{\Delta t}=\nu\Delta \tilde{\textbf{u}}_1^{n+1}-\nabla p_{1}^n+\textbf{f}^{n+1}, \ \ \tilde{\textbf{u}}_1^{n+1}|_{\partial \Omega}=0;
\label{e_implementation_4}\\
\frac{\tilde{\textbf{u}}_2^{n+1}}{\Delta t}+\textbf{u}^{n}\cdot \nabla\textbf{u}^{n}=\nu\Delta \tilde{\textbf{u}}_2^{n+1}-\nabla p_2^n,  \ \ \ \tilde{\textbf{u}}_2^{n+1}|_{\partial \Omega}=0.
\label{e_implementation_5}
\end{numcases}
Then, we can determine $\textbf{u}_{i}^{n+1}$, $p_i^{n+1} $ $(i=1,2)$ by 
   \begin{numcases}{}
\frac{\textbf{u}_i^{n+1}-\tilde{\textbf{u}}_i^{n+1}}{\Delta t}+\nabla (p_i^{n+1}-p_i^n)=0,
\label{e_implementation_6}\\
\nabla\cdot \textbf{u}_i^{n+1}=0,\ \ \ \textbf{u}_i^{n+1}\cdot \textbf{n}|_{\partial \Omega}=0.
\label{e_implementation_7}
\end{numcases} 
 Once $\tilde{\textbf{u}}_i^{n+1}$, $\textbf{u}_{i}^{n+1}$, $p_i^{n+1} $ $(i=1,2)$ are known, we can determine explicitly  $S^{n+1}$ from \eqref{e_model_semi4} as follows:
% \begin{equation}
%S^{n+1}= [\frac{q^n}{\Delta t}+\frac{1}{\rm{exp}(  -\frac{t^{n+1}}{T} )}
%(\textbf{u}^n\cdot\nabla \textbf{u}^n,\tilde{\textbf{u}}^{n+1})] \rm{exp}( \frac{t^{n+1}}{T}) \frac{\Delta t T}{\Delta t + T}.
%\end{equation}
{\color{black}
 \begin{equation} \label{e_S_solve}
 \aligned
 &\left( \frac{T+\Delta t}{T \Delta t }- \exp( \frac{2 t^{n+1} }{T} ) (\textbf{u}^n\cdot\nabla \textbf{u}^n,\tilde{\textbf{u}}_2^{n+1})  \right) \exp( -\frac{t^{n+1} }{T} )S^{n+1} \\
& \ \ \ \ \ \ 
 =\exp( \frac{t^{n+1} }{T} ) (\textbf{u}^n\cdot\nabla \textbf{u}^n,\tilde{\textbf{u}}_1^{n+1}) +\frac{1}{\Delta t} q^n.
 \endaligned
\end{equation}
}
Finally,  we can obtain $\textbf{u}^{n+1}$ and $p^{n+1}$ from \eqref{e_implementation_2}-\eqref{e_implementation_3}. 

In summary, at each time step, we only need to solve two Poisson-type equations \eqref{e_implementation_4}-\eqref{e_implementation_5}, and 
\eqref{e_implementation_6}-\eqref{e_implementation_7} which can be solved as two Poisson equations for $p_i^{n+1}-p^n_i\, (i=1,2)$ with homogeneous Neumann boundary conditions. Hence, the scheme is very efficient.
 
\begin{comment}
It remains to  determine $S^{n+1}$. Taking the inner product of \eqref{e_model_semi1} with $\tilde{\textbf{u}}^{n+1}$, we have
\begin{equation}\label{e_implementation_8}
\aligned
&(\frac{\tilde{\textbf{u}}^{n+1}-\textbf{u}^{n}}{\Delta t},\tilde{\textbf{u}}^{n+1})+\nu\|\nabla \tilde{\textbf{u}}^{n+1}\|^2+S^{n+1}(\textbf{u}^n\cdot \nabla\textbf{u}^n,\tilde{\textbf{u}}^{n+1})\\
&\ \ \ \ \ \ \ \ \ \ 
=(\textbf{f}^{n+1},\tilde{\textbf{u}}^{n+1})-(\nabla p^n,\tilde{\textbf{u}}^{n+1}).
\endaligned
\end{equation}
\end{comment}

% ************************************************************************************************
\textbf{Scheme \uppercase\expandafter{\romannumeral 2} (second-order accuracy):} The second-order semi-discrete version of the rotational pressure-correction method \cite{guermond2004error} can be written as follows: Find ($\tilde{\textbf{u}}^{n+1}, \textbf{u}^{n+1}, p^{n+1}, q^{n+1}$) by solving
  \begin{equation}
   \frac{3\tilde{\textbf{u}}^{n+1}-4\textbf{u}^{n}+\textbf{u}^{n-1} }{2\Delta t}+\frac{q^{n+1}}{\rm{exp}( -\frac{t^{n+1}}{T} )}\bar{\textbf{u}}^{n}\cdot \nabla \bar{ \textbf{u} }^{n}
     -\nu\Delta\tilde{\textbf{u}}^{n+1}     +\nabla p^{n}=\textbf{f}^{n+1},  \tilde{\textbf{u}}^{n+1}|_{\partial \Omega}=0; \label{e_model_semi_second1}
  \end{equation}   
   \begin{eqnarray}  
   &&  \frac{3\textbf{u}^{n+1}-3\tilde{\textbf{u}}^{n+1}}{2\Delta t}+\nabla(p^{n+1}-p^n+\nu \nabla \cdot \tilde{\textbf{u}}^{n+1})=0; \label{e_model_semi_second2}\\
 &&    \nabla\cdot\textbf{u}^{n+1}=0, \ \ \textbf{u}^{n+1}\cdot \textbf{n}|_{\partial \Omega}=0;
 \label{e_model_semi_second3}
  \end{eqnarray}
 \begin{equation}
   \frac{3q^{n+1}-4q^n+q^{n-1}}{2\Delta t}=-\frac{1}{T}q^{n+1}+\frac{1}{\rm{exp}(  -\frac{t^{n+1}}{T} )}
(\bar{\textbf{u} }^n\cdot\nabla \bar{\textbf{u} }^n,\tilde{\textbf{u}}^{n+1}),
\label{e_model_semi_second4} 
\end{equation}
where $\bar{\textbf{u}}^{n}=2\textbf{u}^{n}-\textbf{u}^{n-1}$. For  $n = 0$, we can compute ($\tilde{\textbf{u}}^{1}$, $\textbf{u}^{1}$, $p^{1}$, $q^{1}$) by the first-order scheme described above.

Implementation of the second-order scheme \eqref{e_model_semi_second1}-\eqref{e_model_semi_second4} is essentially the same as that of the first-order scheme \eqref{e_model_semi1}-\eqref{e_model_semi4}.

%==================================================================
  \section{Energy Stability} 
In this section, we will demonstrate that the first- and second-order pressure-correction schemes \eqref{e_model_semi1}-\eqref{e_model_semi4} and \eqref{e_model_semi_second1}-\eqref{e_model_semi_second4} are unconditionally energy stable. 
 \medskip
 
 \begin{theorem}\label{thm_energy stability_first order}
In the absence of the external force $\textbf{f}$, the scheme  \eqref{e_model_semi1}-\eqref{e_model_semi4} is unconditionally stable in the sense that
{\color{black}
\begin{equation*}
\aligned
E^{n+1}-E^{n}\leq -2\nu\Delta t\|\nabla\tilde{{\bf u}}^{n+1}\|^2, \ \ \forall \Delta t,\; n\geq 0,
\endaligned
\end{equation*} 
where 
\begin{equation*}
E^{n+1}=
\|\rm {\bf u}^{n+1}\|^2+|q^{n+1}|^2+(\Delta t)^2\|\nabla p^{n+1}\|^2.
\end{equation*} 
}
\end{theorem}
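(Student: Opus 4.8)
The plan is to test the three evolution equations \eqref{e_model_semi1}, \eqref{e_model_semi2}, \eqref{e_model_semi4} with suitable multipliers and add the results so that the nonlinear term cancels exactly. First I would take the $L^2$ inner product of the viscous step \eqref{e_model_semi1} with $2\Delta t\,\tilde{\textbf{u}}^{n+1}$. Using the identity $2(a-b,a)=\|a\|^2-\|b\|^2+\|a-b\|^2$ on the time-difference term, the skew-symmetry \eqref{e_skew-symmetric2} is \emph{not} directly applicable because the convection coefficient $\textbf{u}^n$ appears but it is tested against $\tilde{\textbf{u}}^{n+1}$, not against itself; instead the nonlinear contribution is $\frac{2\Delta t\, q^{n+1}}{\exp(-t^{n+1}/T)}(\textbf{u}^n\cdot\nabla\textbf{u}^n,\tilde{\textbf{u}}^{n+1})$, which I will keep for now and cancel later. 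This produces $\|\tilde{\textbf{u}}^{n+1}\|^2-\|\textbf{u}^n\|^2+\|\tilde{\textbf{u}}^{n+1}-\textbf{u}^n\|^2+2\nu\Delta t\|\nabla\tilde{\textbf{u}}^{n+1}\|^2$ plus the pressure term $2\Delta t(\nabla p^n,\tilde{\textbf{u}}^{n+1})$ on the left.

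Next I would handle the projection step \eqref{e_model_semi2}. Rewriting it as $\textbf{u}^{n+1}-\tilde{\textbf{u}}^{n+1}=-\Delta t\nabla(p^{n+1}-p^n)$ and using that $\textbf{u}^{n+1}$ is divergence-free with $\textbf{u}^{n+1}\cdot\textbf{n}|_{\partial\Omega}=0$, the standard projection argument gives the orthogonal decomposition $\|\tilde{\textbf{u}}^{n+1}\|^2=\|\textbf{u}^{n+1}\|^2+(\Delta t)^2\|\nabla(p^{n+1}-p^n)\|^2$; moreover taking the inner product of \eqref{e_model_semi2} with $2(\Delta t)^2\nabla p^{n+1}$ and again using $\nabla\cdot\textbf{u}^{n+1}=0$ together with the polarization identity on $(\Delta t)^2(\nabla(p^{n+1}-p^n),\nabla p^{n+1})$ yields $(\Delta t)^2\|\nabla p^{n+1}\|^2-(\Delta t)^2\|\nabla p^n\|^2+(\Delta t)^2\|\nabla(p^{n+1}-p^n)\|^2=0$. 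Combining these lets me replace $\|\tilde{\textbf{u}}^{n+1}\|^2$ by $\|\textbf{u}^{n+1}\|^2$ and convert the troublesome $2\Delta t(\nabla p^n,\tilde{\textbf{u}}^{n+1})$ term: since $\tilde{\textbf{u}}^{n+1}=\textbf{u}^{n+1}+\Delta t\nabla(p^{n+1}-p^n)$ and $(\nabla p^n,\textbf{u}^{n+1})=0$, this term equals $2(\Delta t)^2(\nabla p^n,\nabla(p^{n+1}-p^n))$, which is exactly what combines with the $p$-identity to close the telescoping sum for $(\Delta t)^2\|\nabla p^{n+1}\|^2$.

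For the SAV equation, I would multiply \eqref{e_model_semi4} by $2\Delta t\, q^{n+1}$ and use $2(q^{n+1}-q^n)q^{n+1}=|q^{n+1}|^2-|q^n|^2+|q^{n+1}-q^n|^2$, obtaining $|q^{n+1}|^2-|q^n|^2+|q^{n+1}-q^n|^2+\frac{2\Delta t}{T}|q^{n+1}|^2=\frac{2\Delta t\, q^{n+1}}{\exp(-t^{n+1}/T)}(\textbf{u}^n\cdot\nabla\textbf{u}^n,\tilde{\textbf{u}}^{n+1})$. The right-hand side here is precisely the negative of the nonlinear term appearing in the momentum estimate, so adding the three weighted identities makes the nonlinear terms cancel identically — this cancellation is the whole point of the added balance term in \eqref{e_model_transform2}. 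After cancellation, the left side becomes $E^{n+1}-E^n+\|\tilde{\textbf{u}}^{n+1}-\textbf{u}^n\|^2+|q^{n+1}-q^n|^2+\frac{2\Delta t}{T}|q^{n+1}|^2+2\nu\Delta t\|\nabla\tilde{\textbf{u}}^{n+1}\|^2=0$, and dropping the four manifestly nonnegative terms $\|\tilde{\textbf{u}}^{n+1}-\textbf{u}^n\|^2$, $|q^{n+1}-q^n|^2$, $\frac{2\Delta t}{T}|q^{n+1}|^2$ on the left gives the claimed inequality. The main obstacle is bookkeeping: one must be careful that every pressure cross-term is accounted for — in particular verifying that $2\Delta t(\nabla p^n,\tilde{\textbf{u}}^{n+1})$ combines cleanly with the terms generated by testing \eqref{e_model_semi2} against $2(\Delta t)^2\nabla p^{n+1}$, so that all $\|\nabla(p^{n+1}-p^n)\|^2$ contributions either cancel or have the right sign, leaving exactly the telescoping quantity $(\Delta t)^2\|\nabla p^{n+1}\|^2$ in $E^{n+1}$ with no residual indefinite term.
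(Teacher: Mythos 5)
Your proposal is correct and follows essentially the same route as the paper: test \eqref{e_model_semi1} with $\tilde{\textbf{u}}^{n+1}$, use the projection step to trade $\|\tilde{\textbf{u}}^{n+1}\|^2$ and the pressure cross term for the telescoping quantity $(\Delta t)^2\|\nabla p^{n+1}\|^2$ (the paper does this in one stroke by squaring $\textbf{u}^{n+1}+\Delta t\nabla p^{n+1}=\tilde{\textbf{u}}^{n+1}+\Delta t\nabla p^n$, while you use the equivalent orthogonal decomposition together with $2\Delta t(\nabla p^n,\tilde{\textbf{u}}^{n+1})=2(\Delta t)^2(\nabla p^n,\nabla(p^{n+1}-p^n))$), then multiply \eqref{e_model_semi4} by $2\Delta t\,q^{n+1}$ and add so the nonlinear terms cancel exactly. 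One harmless slip: testing \eqref{e_model_semi2} with $2(\Delta t)^2\nabla p^{n+1}$ does \emph{not} yield $\|\nabla p^{n+1}\|^2-\|\nabla p^n\|^2+\|\nabla(p^{n+1}-p^n)\|^2=0$ (that relation is false, and the test is in fact vacuous because $\textbf{u}^{n+1}-\tilde{\textbf{u}}^{n+1}=-\Delta t\nabla(p^{n+1}-p^n)$); what your final bookkeeping actually needs and uses is only the elementary expansion $\|\nabla p^{n+1}\|^2=\|\nabla p^n\|^2+2(\nabla p^n,\nabla(p^{n+1}-p^n))+\|\nabla(p^{n+1}-p^n)\|^2$, with which the telescoping closes and the stated energy identity, hence the theorem, follows.
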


\begin{proof} 
Taking the inner product of \eqref{e_model_semi1} with   $\Delta t\tilde{\textbf{u}}^{n+1}$ and using the identity 
\begin{equation}\label{e_identity}
\aligned
(a-b,a)=\frac{1}{2}(|a|^2-|b|^2+|a-b|^2),
\endaligned
\end{equation} 
we have
\begin{equation}\label{e_energy decay1}
\aligned
&\frac{\|\tilde{\textbf{u}}^{n+1}\|^2-\|\textbf{u}^{n}\|^2}{2}+\frac{\|\tilde{\textbf{u}}^{n+1}-\textbf{u}^{n}\|^2}{2}
+\Delta t\frac{q^{n+1}}{\rm{exp}( -\frac{t^{n+1}}{T} )}(\textbf{u}^{n}\cdot \nabla\textbf{u}^{n},\tilde{\textbf{u}}^{n+1})\\
&\ \ \ \ \ \ 
=-\nu\Delta t\|\nabla\tilde{\textbf{u}}^{n+1}\|^2-\Delta t(\nabla p^n,\tilde{\textbf{u}}^{n+1}).
\endaligned
\end{equation} 
Recalling \eqref{e_model_semi2}, we have 
\begin{equation}\label{e_energy decay2}
\aligned
\textbf{u}^{n+1}+\Delta t\nabla p^{n+1}=\tilde{\textbf{u}}^{n+1}+\Delta t \nabla p^n.
\endaligned
\end{equation}
Taking the inner product of \eqref{e_energy decay2} with itself on both sides and noticing that $(\nabla p^{n+1},\textbf{u}^{n+1})=-(p^{n+1},\nabla\cdot \textbf{u}^{n+1})=0$, we have
\begin{equation}\label{e_energy decay3}
\aligned
\|\textbf{u}^{n+1}\|^2+(\Delta t)^2\|\nabla p^{n+1}\|^2=\|\tilde{\textbf{u}}^{n+1}\|^2+2\Delta t(\nabla p^n,\tilde{\textbf{u}}^{n+1})+(\Delta t)^2\|\nabla p^n\|^2.
\endaligned
\end{equation}
Combining \eqref{e_energy decay1} with \eqref{e_energy decay3} leads to
\begin{equation}\label{e_energy decay4}
\aligned
&\frac{\|\textbf{u}^{n+1}\|^2-\|\textbf{u}^{n}\|^2}{2}+\frac{\|\tilde{\textbf{u}}^{n+1}-\textbf{u}^{n}\|^2}{2}+\frac{(\Delta t)^2}{2}\|\nabla p^{n+1}\|^2\\
&+\Delta t\frac{q^{n+1}}{\rm{exp}(  -\frac{t^{n+1}}{T} )}(\textbf{u}^{n}\cdot \nabla\textbf{u}^{n},\tilde{\textbf{u}}^{n+1})\\
=&\frac{(\Delta t)^2}{2}\|\nabla p^n\|^2-\nu\Delta t\|\nabla\tilde{\textbf{u}}^{n+1}\|^2.
\endaligned
\end{equation} 
Multiplying \eqref{e_model_semi4} by $q^{n+1}\Delta t$ and using the above equation, we have
{\color{black}
\begin{equation}\label{e_energy decay6}
\aligned
& \frac{1}{2} |q^{n+1}|^2- \frac{1}{2} |q^n|^2+ \frac{1}{2} |q^{n+1}-q^n|^2\\
=&-\frac{1}{T}\Delta t|q^{n+1}|^2+\Delta t\frac{q^{n+1}}{\rm{exp}(  -\frac{t^{n+1}}{T} )}(\textbf{u}^{n}\cdot \nabla\textbf{u}^{n},\tilde{\textbf{u}}^{n+1}).
\endaligned
\end{equation}
}
Then summing up \eqref{e_energy decay4} with \eqref{e_energy decay6} results in 
{\color{black}
\begin{equation*}\label{e_energy decay7}
\aligned
&\|\textbf{u}^{n+1}\|^2-\|\textbf{u}^{n}\|^2+|q^{n+1}|^2-|q^n|^2+\frac{2}{T}\Delta t|q^{n+1}|^2+(\Delta t)^2\|\nabla p^{n+1}\|^2\\
&- (\Delta t)^2 \|\nabla p^n\|^2+|q^{n+1}-q^n|^2+\|\tilde{\textbf{u}}^{n+1}-\textbf{u}^{n}\|^2\\
\hskip 1cm & \leq -2\nu\Delta t\|\nabla\tilde{\textbf{u}}^{n+1}\|^2,
\endaligned
\end{equation*}
}
which implies the desired result.   
\end{proof}

% *************************************************************************************************
\medskip

The energy stability for any rotational pressure-correction schemes is much more involved \cite{guermond2004error}, particulary in the nonlinear case. Previously, the energy stability of second-order rotational pressure-correction schemes is only proved   for the  time dependent Stokes equations \cite{guermond2004error,CheS20}, and only very recently, an energy stability result is proved for the first-order rotational pressure-correction scheme for the Navier-Stokes equations in \cite{CheS20}. 

 \begin{theorem}\label{thm_energy stability_second order}
In the absence of the external force $\textbf{f}$, the scheme \eqref{e_model_semi_second1}-\eqref{e_model_semi_second4} is unconditionally stable in the sense that 
\begin{equation*}\label{e_energy decay_second}
\aligned
E^{n+1}-E^{n}\leq -2\nu\Delta t\|\nabla\tilde{\textbf{u}}^{n+1}\|^2, \ \ \forall \Delta t,\; n\geq 0,
\endaligned
\end{equation*} 
where 
\begin{equation*}\label{e_definition of E}
\aligned
E^{n+1}= &\| \textbf{u}^{n+1}\|^2+\| 2\textbf{u}^{n+1}-\textbf{u}^{n} \|^2+\frac{4}{3}(\Delta t)^2 \| \nabla (p^{n+1}+g^{n+1})\|^2\\
&+2\nu^{-1}\Delta t\|g^{n+1}\|^2+|q^{n+1}|^2+|2q^{n+1}-q^n|^2,
\endaligned
\end{equation*} 
where $\{g^{n+1}\}$ is recursively  defined by
\begin{equation}\label{e_energy decay_second4}
\aligned
g^0=0,\ \ g^{n+1}=\nu\nabla \cdot \tilde{\textbf{u}}^{n+1}+g^n, \ n\geq 0.
\endaligned
\end{equation}
\end{theorem}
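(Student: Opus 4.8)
The plan is to follow the template of the proof of Theorem~\ref{thm_energy stability_first order}, replacing the elementary identity \eqref{e_identity} by the BDF2 polarization identity $2(3a-4b+c,a)=|a|^2+|2a-b|^2+|a-2b+c|^2-|b|^2-|2b-c|^2$ and handling the rotational term through the auxiliary quantity $g^{n+1}$. First I would take the inner product of \eqref{e_model_semi_second1} (with $\textbf{f}=\textbf{0}$) with $4\Delta t\,\tilde{\textbf{u}}^{n+1}$; the BDF2 identity turns the discrete time derivative into $\|\tilde{\textbf{u}}^{n+1}\|^2+\|2\tilde{\textbf{u}}^{n+1}-\textbf{u}^n\|^2+\|\tilde{\textbf{u}}^{n+1}-2\textbf{u}^n+\textbf{u}^{n-1}\|^2-\|\textbf{u}^n\|^2-\|2\textbf{u}^n-\textbf{u}^{n-1}\|^2$, the viscous term contributes $4\nu\Delta t\|\nabla\tilde{\textbf{u}}^{n+1}\|^2$, and the remaining terms are the trilinear term $4\Delta t\,\frac{q^{n+1}}{\exp(-t^{n+1}/T)}b(\bar{\textbf{u}}^n,\bar{\textbf{u}}^n,\tilde{\textbf{u}}^{n+1})$ and the pressure term $4\Delta t(\nabla p^n,\tilde{\textbf{u}}^{n+1})$. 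In parallel I would multiply \eqref{e_model_semi_second4} by $4\Delta t\,q^{n+1}$, apply the scalar BDF2 identity, and add the two relations: the trilinear/$q$ coupling cancels exactly (as in the first-order case), leaving a harmless non-negative term proportional to $\Delta t\,|q^{n+1}|^2$ from the $-\tfrac1Tq^{n+1}$ term.

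The core of the argument is the pressure term together with the rotational correction. Setting $\psi^k:=p^k+g^k$ and using the recursion \eqref{e_energy decay_second4}, the correction step \eqref{e_model_semi_second2} becomes $\textbf{u}^{n+1}+\tfrac{2\Delta t}{3}\nabla\psi^{n+1}=\tilde{\textbf{u}}^{n+1}+\tfrac{2\Delta t}{3}\nabla\psi^n$, i.e. $\tilde{\textbf{u}}^{n+1}=\textbf{u}^{n+1}+\tfrac{2\Delta t}{3}\nabla(\psi^{n+1}-\psi^n)$. Since $\nabla\cdot\textbf{u}^{n+1}=\nabla\cdot\textbf{u}^{n}=0$ and these fields have vanishing normal trace, all cross terms produced by squaring are orthogonal, which yields the clean identities $\|\tilde{\textbf{u}}^{n+1}\|^2=\|\textbf{u}^{n+1}\|^2+\tfrac{4\Delta t^2}{9}\|\nabla(\psi^{n+1}-\psi^n)\|^2$ and $\|2\tilde{\textbf{u}}^{n+1}-\textbf{u}^n\|^2=\|2\textbf{u}^{n+1}-\textbf{u}^n\|^2+\tfrac{16\Delta t^2}{9}\|\nabla(\psi^{n+1}-\psi^n)\|^2$, converting all $\tilde{\textbf{u}}$-norms from the BDF2 identity into the $\textbf{u}$-norms appearing in $E^{n+1}$. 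For the pressure term I would write $\nabla p^n=\nabla\psi^n-\nabla g^n$ and treat the two pieces separately. For the $\psi$-piece, substituting the expression for $\tilde{\textbf{u}}^{n+1}$ and using $(\textbf{u}^{n+1},\nabla\psi^n)=0$ gives, after one polarization identity, $4\Delta t(\nabla\psi^n,\tilde{\textbf{u}}^{n+1})=\tfrac{4\Delta t^2}{3}(\|\nabla\psi^{n+1}\|^2-\|\nabla\psi^n\|^2)-\tfrac{4\Delta t^2}{3}\|\nabla(\psi^{n+1}-\psi^n)\|^2$, which supplies exactly the $\tfrac43(\Delta t)^2\|\nabla(p^{n+1}+g^{n+1})\|^2$ term in $E^{n+1}$. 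For the $g$-piece, integrating by parts (the boundary term vanishes since $\tilde{\textbf{u}}^{n+1}|_{\partial\Omega}=\textbf{0}$) and using $g^{n+1}-g^n=\nu\nabla\cdot\tilde{\textbf{u}}^{n+1}$ gives $-4\Delta t(\nabla g^n,\tilde{\textbf{u}}^{n+1})=\tfrac{2\Delta t}{\nu}(|g^{n+1}|^2-|g^n|^2)-2\nu\Delta t\|\nabla\cdot\tilde{\textbf{u}}^{n+1}\|^2$, which supplies the $2\nu^{-1}\Delta t\|g^{n+1}\|^2$ term in $E^{n+1}$ but leaves behind the positive defect $2\nu\Delta t\|\nabla\cdot\tilde{\textbf{u}}^{n+1}\|^2$ on the dissipation side.

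Assembling everything, the telescoping terms reconstitute $E^{n+1}-E^n$; all remaining terms — $\|\tilde{\textbf{u}}^{n+1}-2\textbf{u}^n+\textbf{u}^{n-1}\|^2$, $|q^{n+1}-2q^n+q^{n-1}|^2$, the $\Delta t\,|q^{n+1}|^2$ term, and the surviving positive multiple of $\|\nabla(\psi^{n+1}-\psi^n)\|^2$ (the net coefficient is $\tfrac49+\tfrac{16}{9}-\tfrac43=\tfrac89>0$) — are non-negative and may be discarded, leaving $E^{n+1}-E^n\le-4\nu\Delta t\|\nabla\tilde{\textbf{u}}^{n+1}\|^2+2\nu\Delta t\|\nabla\cdot\tilde{\textbf{u}}^{n+1}\|^2$. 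The step I expect to be the main obstacle — and the one that makes the rotational scheme tick, explaining why this has been open — is the coordinated handling of the rotational term: defining $g^{n+1}$ to absorb the accumulated divergence so that the defect $2\nu\Delta t\|\nabla\cdot\tilde{\textbf{u}}^{n+1}\|^2$ is controlled while enough dissipation survives. This is closed by the identity $\|\nabla\textbf{v}\|^2=\|\nabla\cdot\textbf{v}\|^2+\|\nabla\times\textbf{v}\|^2$ for $\textbf{v}\in\textbf{H}^1_0(\Omega)$ (proved by integrating by parts the mixed second derivatives, whose boundary contributions cancel), whence $\|\nabla\cdot\tilde{\textbf{u}}^{n+1}\|^2\le\|\nabla\tilde{\textbf{u}}^{n+1}\|^2$ and $E^{n+1}-E^n\le-2\nu\Delta t\|\nabla\tilde{\textbf{u}}^{n+1}\|^2$, which is the claim. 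The case $n=0$, where $(\tilde{\textbf{u}}^1,\textbf{u}^1,p^1,q^1)$ comes from Scheme~I, is treated separately with the obvious reading of $E^0$.
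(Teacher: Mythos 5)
Your proposal is correct and follows essentially the same route as the paper: testing \eqref{e_model_semi_second1} with $4\Delta t\,\tilde{\textbf{u}}^{n+1}$, using the BDF2 identity \eqref{e_energy decay_second2}, absorbing the rotational correction through $g^{n+1}$ exactly as in \eqref{e_energy decay_second7}, cancelling the trilinear term against the $q$-equation, and closing with the identity $\|\nabla\textbf{v}\|^2=\|\nabla\cdot\textbf{v}\|^2+\|\nabla\times\textbf{v}\|^2$. The only difference is bookkeeping: the paper splits $3\tilde{\textbf{u}}^{n+1}-4\textbf{u}^{n}+\textbf{u}^{n-1}$ through $\textbf{u}^{n+1}$ and squares the rearranged projection step \eqref{e_energy decay_second5}, whereas you apply the BDF2 identity directly to $\tilde{\textbf{u}}^{n+1}$ and expand $\tilde{\textbf{u}}^{n+1}=\textbf{u}^{n+1}+\tfrac{2\Delta t}{3}\nabla(\psi^{n+1}-\psi^{n})$ with explicit coefficient counting (the surplus $\tfrac{8}{9}(\Delta t)^2\|\nabla(\psi^{n+1}-\psi^{n})\|^2$), which leads to the same final inequality.
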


\begin{proof} 
Taking the inner product of \eqref{e_model_semi_second1} with $4\Delta t\tilde{\textbf{u}}^{n+1}$ leads to
\begin{equation}\label{e_energy decay_second1}
\aligned
&2( 3\tilde{\textbf{u}}^{n+1}-4\textbf{u}^{n}+\textbf{u}^{n-1},\tilde{\textbf{u}}^{n+1} )+4\nu\Delta t \| \nabla\tilde{\textbf{u}}^{n+1} \|^2 \\
=&-4\Delta t\frac{q^{n+1}}{\rm{exp}( -\frac{t^{n+1}}{T} )} (\bar{\textbf{u}}^{n}\cdot \nabla \bar{ \textbf{u} }^{n}, \tilde{\textbf{u}}^{n+1} )-4\Delta t(\nabla p^{n}, \tilde{\textbf{u}}^{n+1}).
\endaligned
\end{equation} 
Using \eqref{e_model_semi_second2} and the identity
\begin{equation}\label{e_energy decay_second2}
\aligned
2(3a-4b+c,a)=|a|^2+|2a-b|^2-|b|^2-|2b-c|^2+|a-2b+c|^2,
\endaligned
\end{equation}
we have
\begin{equation}\label{e_energy decay_second3}
\aligned
&2( 3\tilde{\textbf{u}}^{n+1}-4\textbf{u}^{n}+\textbf{u}^{n-1},\tilde{\textbf{u}}^{n+1} )=
2\left( 3( \tilde{\textbf{u}}^{n+1}-\textbf{u}^{n+1} )+3\textbf{u}^{n+1}-4\textbf{u}^{n}+\textbf{u}^{n-1},\tilde{\textbf{u}}^{n+1} \right) \\
&\ \ \ \ \
=6(\tilde{\textbf{u}}^{n+1}-\textbf{u}^{n+1}, \tilde{\textbf{u}}^{n+1})+2( 3\textbf{u}^{n+1}-4\textbf{u}^{n}+\textbf{u}^{n-1},\textbf{u}^{n+1} )\\
&\ \ \ \ \ \ \ \ 
+2( 3\textbf{u}^{n+1}-4\textbf{u}^{n}+\textbf{u}^{n-1},\tilde{\textbf{u}}^{n+1}-\textbf{u}^{n+1} ) \\
&\ \ \ \ \
=3( \|\tilde{\textbf{u}}^{n+1}\|^2- \| \textbf{u}^{n+1}\|^2+  \|\tilde{\textbf{u}}^{n+1}-\textbf{u}^{n+1}\|^2)+\| \textbf{u}^{n+1}\|^2+\| 2\textbf{u}^{n+1}-\textbf{u}^{n} \|^2\\
&\ \ \ \ \ \ \ \ 
-\| \textbf{u}^{n}\|^2-\| 2\textbf{u}^{n}-\textbf{u}^{n-1} \|^2+\| \textbf{u}^{n+1}-2\textbf{u}^{n}+\textbf{u}^{n-1} \|^2.
\endaligned
\end{equation}

Setting $H^{n+1}=p^{n+1}+g^{n+1}$, we can recast \eqref{e_model_semi_second2} as
\begin{equation}\label{e_energy decay_second5}
\aligned
\sqrt{3}\textbf{u}^{n+1}+\frac{2}{\sqrt{3}}\Delta t \nabla H^{n+1}=\sqrt{3} \tilde{\textbf{u}}^{n+1}+\frac{2}{\sqrt{3}}\Delta t \nabla H^{n}.
\endaligned
\end{equation} 
Taking the inner product of \eqref{e_energy decay_second5} with itself on both sides, we have
\begin{equation}\label{e_energy decay_second6}
\aligned
&3 \|\textbf{u}^{n+1} \|^2+\frac{4}{3}(\Delta t)^2 \| \nabla H^{n+1}\|^2\\
=&3 \| \tilde{\textbf{u}}^{n+1} \|^2+\frac{4}{3}(\Delta t)^2 \| \nabla H^{n}\|^2+4\Delta t(\tilde{\textbf{u}}^{n+1}, \nabla p^n )
+4\Delta t(\tilde{\textbf{u}}^{n+1}, \nabla g^n ).
\endaligned
\end{equation} 
Thanks to \eqref{e_energy decay_second4}, we have 
\begin{equation}\label{e_energy decay_second7}
\aligned
&4\Delta t(\tilde{\textbf{u}}^{n+1}, \nabla g^n )=-4\nu^{-1}\Delta t(g^{n+1}-g^n,  g^n)\\
=&2\nu^{-1}\Delta t( \|g^n\|^2- \|g^{n+1}\|^2+ \|g^{n+1}-g^n\|^2)\\
=&2\nu^{-1}\Delta t\|g^n\|^2- 2\nu^{-1}\Delta t\|g^{n+1}\|^2+ 2\nu\Delta t\| \nabla\cdot \tilde{\textbf{u}}^{n+1}\|^2.
\endaligned
\end{equation} 
Using the identity 
\begin{equation}\label{e_energy decay_second8}
\aligned
\| \nabla \times \textbf{v}\|^2+\| \nabla \cdot \textbf{v}\|^2=\| \nabla \textbf{v}\|^2,\ \ \forall \  \textbf{v} \in \textbf{H}^1_0(\Omega),
\endaligned
\end{equation} 
we have
\begin{equation}\label{e_energy decay_second9}
\aligned
&4\Delta t(\tilde{\textbf{u}}^{n+1}, \nabla g^n )
=2\nu^{-1}\Delta t\|g^n\|^2- 2\nu^{-1}\Delta t\|g^{n+1}\|^2\\
&\ \ \ \ \ \ 
+ 2\nu\Delta t\| \nabla \tilde{\textbf{u}}^{n+1}\|^2-2\nu\Delta t\| \nabla \times \textbf{u}^{n+1}\|^2.
\endaligned
\end{equation} 
Then combining \eqref{e_energy decay_second1} with \eqref{e_energy decay_second2}-\eqref{e_energy decay_second9} results in
\begin{equation}\label{e_energy decay_second10}
\aligned
& \| \textbf{u}^{n+1}\|^2+\| 2\textbf{u}^{n+1}-\textbf{u}^{n} \|^2+\frac{4}{3}(\Delta t)^2 \| \nabla H^{n+1}\|^2+2\nu^{-1}\Delta t\|g^{n+1}\|^2\\
&+3 \|\tilde{\textbf{u}}^{n+1}-\textbf{u}^{n+1}\|^2+2\nu\Delta t \| \nabla\tilde{\textbf{u}}^{n+1} \|^2+2\nu\Delta t\| \nabla \times \textbf{u}^{n+1}\|^2\\
\leq & \| \textbf{u}^{n}\|^2+\| 2\textbf{u}^{n}-\textbf{u}^{n-1} \|^2+\frac{4}{3}(\Delta t)^2 \| \nabla H^{n}\|^2+2\nu^{-1}\Delta t\|g^{n}\|^2\\
&-4\Delta t\frac{q^{n+1}}{\rm{exp}( -\frac{t^{n+1}}{T} )} (\bar{\textbf{u}}^{n}\cdot \nabla \bar{ \textbf{u} }^{n}, \tilde{\textbf{u}}^{n+1} ).
\endaligned
\end{equation} 
Multiplying \eqref{e_model_semi_second4} by $4\Delta tq^{n+1}$ and using \eqref{e_energy decay_second2}, we have
\begin{equation}\label{e_energy decay_second11}
\aligned
&|q^{n+1}|^2+|2q^{n+1}-q^n|^2-|q^n|^2-|2q^{n}-q^{n-1}|^2+|q^{n+1}-2q^n+q^{n-1}|^2\\
=&-\frac{4}{T}\Delta t|q^{n+1}|^2+4\Delta t\frac{q^{n+1}}{\rm{exp}(  -\frac{t^{n+1}}{T} )}(\bar{\textbf{u}}^{n}\cdot \nabla \bar{\textbf{u}}^{n},\tilde{\textbf{u}}^{n+1}).
\endaligned
\end{equation}
Then summing up \eqref{e_energy decay_second10} with \eqref{e_energy decay_second11} results in 
\begin{equation}\label{e_energy decay_second12}
\aligned
& \| \textbf{u}^{n+1}\|^2+\| 2\textbf{u}^{n+1}-\textbf{u}^{n} \|^2+\frac{4}{3}(\Delta t)^2 \| \nabla H^{n+1}\|^2+2\nu^{-1}\Delta t\|g^{n+1}\|^2\\
&+|q^{n+1}|^2+|2q^{n+1}-q^n|^2+|q^{n+1}-2q^n+q^{n-1}|^2+\frac{4}{T}\Delta t|q^{n+1}|^2\\
&+3 \|\tilde{\textbf{u}}^{n+1}-\textbf{u}^{n+1}\|^2+2\nu\Delta t \| \nabla\tilde{\textbf{u}}^{n+1} \|^2+2\nu\Delta t\| \nabla \times \textbf{u}^{n+1}\|^2\\
\leq & \| \textbf{u}^{n}\|^2+\| 2\textbf{u}^{n}-\textbf{u}^{n-1} \|^2+\frac{4}{3}(\Delta t)^2 \| \nabla H^{n}\|^2\\
&+2\nu^{-1}\Delta t\|g^{n}\|^2+|q^n|^2+|2q^{n}-q^{n-1}|^2,
\endaligned
\end{equation} 
which implies the desired result.
\end{proof}

%==================================================================
  \section{Error Analysis} 
In this section, we carry out a rigorous error analysis for the first-order semi-discrete scheme \eqref{e_model_semi1}-\eqref{e_model_semi4}.

There exist a large body of work devoted to the error analysis of various numerical schemes for the Navier-Stokes equations \eqref{e_model}, we refer to, e.g., 
\cite{heywood1990finite,ammi1994nonlinear,tone2004error,HeSu07,labovsky2009stabilized} for different schemes with coupled approach, and   \cite{shen1992error,weinan1995projection,shen1996error,guermond1999resultat,wang2000convergence,guermond2006overview} for different schemes with decoupled approach. On the other hand, for the SAV approach, some error analysis has been carried out for various gradient flows \cite{shen2018convergence,LSR19,ALL19}. 
 In a recent attempt \cite{li2019error}, we considered a MAC discretization to a second-order version of the scheme \eqref{e_model_semi_original1}-\eqref{e_model_semi_original4} and proved  corresponding error estimates. However, due to the difficulty associated with the nonlinear algebraic equation, we had to assume that  there is a numerical solution satisfying  $q^{n+1}/E(\textbf{u}^n)\ge c_0>0$. Since our new scheme is purely linear, we shall   prove optimal error estimates below without any assumption on the numerical solution.

Let ($\tilde{\textbf{u}}^{n+1}$, $\textbf{u}^{n+1}$, $p^{n+1}$, $q^{n+1}$) be the solution of \eqref{e_model_semi1}-\eqref{e_model_semi4}.  Then we derive immediately from Theorem \ref{thm_energy stability_first order} that 
\begin{equation}\label{e_L2 norm of u and q}
\aligned
\|{\bf u}^{m+1}\|\leq k_0,\ \ |q^{m+1}|\leq k_1, \ \ \forall \ 0\leq m\leq N-1.
\endaligned
\end{equation} 
\begin{equation}\label{e_L2H1 norm of u}
\aligned
 \Delta t\sum_{n=0}^{m}\|{\tilde {\bf u}}^{n+1}\|_{1}^2\leq k_2, \ \ \forall \ 0\leq m\leq N-1,
\endaligned
\end{equation} 
where the constants $k_i$ $(i=0,1,2)$ are independent of $\Delta t$.

We  set
   \begin{numcases}{}
\displaystyle \tilde{e}_{\textbf{u}}^{n+1}=\tilde{\textbf{u}}^{n+1}-\textbf{u}(t^{n+1}),\ \ 
\displaystyle e_{\textbf{u}}^{n+1}=\textbf{u}^{n+1}-\textbf{u}(t^{n+1}), \notag\\
\displaystyle e_{p}^{n+1}=p^{n+1}-p(t^{n+1}),\ \ \ 
\displaystyle e_{q}^{n+1}=q^{n+1}-q(t^{n+1}).\notag
\end{numcases}
\subsection{Error estimates for the velocity}
The main result of this section is stated in the following  theorem.

%The first is about error estimates for the velocity and the SAV variable.
\begin{theorem}\label{the: error_estimate_final}
Assuming $\textbf{u}\in H^3(0,T;\textbf{L}^2(\Omega))\bigcap H^1(0,T;\textbf{H}^2_0(\Omega))\bigcap W^{1,\infty}(0,T;W^{1,\infty}(\Omega))$, $p\in H^2(0,T;H^1(\Omega))$, then for the first-order  scheme \eqref{e_model_semi1}-\eqref{e_model_semi4}, we have
\begin{equation}\label{e_error_estimate_u_final}
\aligned
&\|e_{\textbf{u}}^{m+1}\|^2+\Delta t\sum\limits_{n=0}^m\|\nabla\tilde{e}_{\textbf{u}}^{n+1}\|^2+\sum\limits_{n=0}^m \|\tilde{e}_{\textbf{u}}^{n+1}-e_{\textbf{u}}^n \|^2+(\Delta t)^2 \|\nabla e_p^{m+1}\|^2 \\
& \ \ \ \ \ \ \ \ 
+|e_q^{m+1}|^2+\Delta t\sum\limits_{n=0}^{m}| d_te_q^{n+1}|^2 \leq C(\Delta t)^2,  \ \ \ \forall \ 0\leq m\leq N-1,
\endaligned
\end{equation}
where $C$ is a positive constant  independent of $\Delta t$.
\end{theorem}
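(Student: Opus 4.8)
The plan is to derive an error equation for the velocity by subtracting the scheme from a consistent discretization of the PDE, then apply the discrete Gronwall lemma (Lemma~\ref{lem: gronwall2}) after absorbing all nonlinear and pressure-correction terms. First I would write down the truncation errors: from \eqref{e_modelA} at $t^{n+1}$ one gets $d_t\textbf{u}(t^{n+1}) + \textbf{u}(t^{n+1})\cdot\nabla\textbf{u}(t^{n+1}) - \nu\Delta\textbf{u}(t^{n+1}) + \nabla p(t^{n+1}) = \textbf{f}^{n+1} + R_u^{n+1}$, where $\|R_u^{n+1}\| \le C\Delta t$ under the stated regularity $\textbf{u}\in H^3(0,T;\textbf{L}^2)$, $p\in H^2(0,T;H^1)$; similarly one expands \eqref{e_model_transform2} at $t^{n+1}$, noting $q(t) = \exp(-t/T)$ satisfies $q' = -q/T$ exactly and the nonlinear integral vanishes by \eqref{e_skew-symmetric2}, so the $q$-equation truncation error is $O(\Delta t)$ as well. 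Subtracting \eqref{e_model_semi1}, \eqref{e_model_semi4} from these gives equations for $\tilde e_{\textbf{u}}^{n+1}$ and $e_q^{n+1}$; the pressure-correction step \eqref{e_model_semi2}--\eqref{e_model_semi3} is handled exactly as in the stability proof by forming $e_{\textbf{u}}^{n+1} + \Delta t\nabla e_p^{n+1} = \tilde e_{\textbf{u}}^{n+1} + \Delta t\nabla e_p^n$ (using that $p(t^{n+1})$ contributes a consistency term) and squaring.

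Next I would take the inner product of the velocity error equation with $2\Delta t\,\tilde e_{\textbf{u}}^{n+1}$, use the identity \eqref{e_identity} to produce $\|e_{\textbf{u}}^{n+1}\|^2 - \|e_{\textbf{u}}^n\|^2 + \|\tilde e_{\textbf{u}}^{n+1}-e_{\textbf{u}}^n\|^2$ together with the telescoping $(\Delta t)^2(\|\nabla e_p^{n+1}\|^2 - \|\nabla e_p^n\|^2)$ from the squared pressure step, plus the dissipation $2\nu\Delta t\|\nabla\tilde e_{\textbf{u}}^{n+1}\|^2$; then multiply the $q$-error equation by $2\Delta t\,e_q^{n+1}$ and add. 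The crucial algebraic point, as in the stability analysis, is that the two copies of the nonlinear coupling term $\Delta t\,\frac{q^{n+1}}{\exp(-t^{n+1}/T)}(\textbf{u}^n\cdot\nabla\textbf{u}^n, \tilde e_{\textbf{u}}^{n+1})$ appearing in the velocity and $q$ estimates cancel, leaving only terms involving the \emph{difference} between the scheme's nonlinear term and the exact $\textbf{u}(t^{n+1})\cdot\nabla\textbf{u}(t^{n+1})$ tested against $\tilde e_{\textbf{u}}^{n+1}$. These residual nonlinear terms must be split, e.g. writing $q^{n+1}\textbf{u}^n\cdot\nabla\textbf{u}^n - \exp(-t^{n+1}/T)\,\textbf{u}(t^{n+1})\cdot\nabla\textbf{u}(t^{n+1})$ as a sum over $e_q^{n+1}$, $e_{\textbf{u}}^n$ (or $\tilde e_{\textbf{u}}^n$), and time-discretization remainders, then estimated with the trilinear bounds \eqref{e_estimate for trilinear form} — specifically the inequality $b(\textbf{u},\textbf{v},\textbf{w})\le c_2\|\textbf{u}\|_1^{1/2}\|\textbf{u}\|^{1/2}\|\textbf{v}\|_1^{1/2}\|\textbf{v}\|^{1/2}\|\textbf{w}\|_1$ in $\mathbb{R}^2$ — combined with the a priori bounds \eqref{e_L2 norm of u and q}--\eqref{e_L2H1 norm of u} and the regularity $\textbf{u}\in W^{1,\infty}(0,T;W^{1,\infty})$, so that everything is controlled by $\Delta t\sum(\|e_{\textbf{u}}^n\|^2 + |e_q^{n+1}|^2) + \epsilon\Delta t\|\nabla\tilde e_{\textbf{u}}^{n+1}\|^2 + C(\Delta t)^3$, with the $\epsilon$-term absorbed into the dissipation.

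A subtle point requiring care is the pressure-correction consistency error: the exact solution does not satisfy $\textbf{u}(t^{n+1}) = \tilde{\textbf{u}}(t^{n+1}) - \Delta t\nabla(p(t^{n+1})-p(t^n))$, so one introduces an auxiliary "exact" intermediate velocity or simply carries the term $\Delta t\nabla(p(t^{n+1})-p(t^n)) = O(\Delta t)$ through the squaring step; this is the standard mechanism by which pressure-correction schemes lose half an order in the pressure in stronger norms, but here the claimed estimate is only $(\Delta t)^2\|\nabla e_p^{m+1}\|^2\le C(\Delta t)^2$, i.e. $\|\nabla e_p^{m+1}\| = O(1)$ in this combined norm — wait, that would be the $O(\Delta t)$-weighted pressure, consistent with first-order pressure-correction. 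After summing over $n$ from $0$ to $m$, the telescoping terms collapse and one is in the exact form \eqref{e_Gronwall3} of Lemma~\ref{lem: gronwall2} with $a_k = \|e_{\textbf{u}}^k\|^2 + |e_q^k|^2$, $b_k = 2\nu\|\nabla\tilde e_{\textbf{u}}^k\|^2$ (up to the $\epsilon$ absorption and the extra nonnegative $\|\tilde e_{\textbf{u}}^{n+1}-e_{\textbf{u}}^n\|^2$, $\Delta t|d_t e_q^{n+1}|^2$ terms kept on the left), $c_k\Delta t_k = (\Delta t)^2\|\nabla e_p^k\|^2$, $d_k = C$, $\gamma_k = C(\Delta t)^2$; since $e_{\textbf{u}}^0 = e_q^0 = 0$ and $e_p^0 = 0$ (taking exact initial data and $p^0 = p(0)$), \eqref{e_Gronwall4} yields the bound $C(\Delta t)^2$. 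The main obstacle is the bookkeeping of the residual nonlinear term: one must choose the splitting so that each piece pairs a genuinely small factor (an error at the previous step, controlled by induction, or a time-difference of size $\Delta t$) with $\|\nabla\tilde e_{\textbf{u}}^{n+1}\|$, using the $2$D trilinear estimate and the $H^1$-in-time bound \eqref{e_L2H1 norm of u} on $\tilde{\textbf{u}}^{n+1}$ to avoid any time-step restriction — and in particular verifying that the term involving $e_q^{n+1}$ times $\textbf{u}^n\cdot\nabla\textbf{u}^n$ tested against $\tilde e_{\textbf{u}}^{n+1}$ can be bounded by $|e_q^{n+1}|\cdot\|\textbf{u}^n\|_1\|\textbf{u}^n\|_1^{?}\cdots\|\nabla\tilde e_{\textbf{u}}^{n+1}\|$ with a harmless constant, which is where the uniform bounds on $\|\textbf{u}^n\|$ and the regularity of the exact solution enter decisively.
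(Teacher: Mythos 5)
Your overall strategy is the same as the paper's: form the velocity error equation, reuse the pressure-correction algebra from the stability proof to get the telescoping $(\Delta t)^2\|\nabla e_p^{n}\|^2$ terms, test the $q$-error equation with $e_q^{n+1}$, let the $e_q^{n+1}$-weighted coupling term $(\textbf{u}^n\cdot\nabla\textbf{u}^n,\tilde e_{\textbf{u}}^{n+1})$ cancel between the two equations, estimate the remaining nonlinear pieces with \eqref{e_estimate for trilinear form}, and close with Lemma \ref{lem: gronwall2}. (Note that it is exactly this $e_q$-weighted term that cancels, so your closing worry about bounding it directly is moot; if you did try to bound it you would hit the same obstruction described next.) However, there is a genuine gap at the step you summarize as ``everything is controlled by $\Delta t\sum(\|e_{\textbf{u}}^n\|^2+|e_q^{n+1}|^2)+\epsilon\Delta t\|\nabla\tilde e_{\textbf{u}}^{n+1}\|^2+C(\Delta t)^3$ with $d_k=C$.'' After the cancellation, the $q$-error equation still contains $\frac{e_q^{n+1}}{\exp(-t^{n+1}/T)}\bigl(\textbf{u}^n\cdot\nabla(\textbf{u}(t^{n+1})-\textbf{u}^n),\textbf{u}(t^{n+1})\bigr)$, and any trilinear/skew-symmetry bound of it produces a factor $\|\textbf{u}^n\|_1$, which at this stage is \emph{not} uniformly bounded — the stability theorem only gives the $L^2$ bound \eqref{e_L2 norm of u and q} and the square-summability \eqref{e_L2H1 norm of u}. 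Young's inequality then leaves a term of the form $C\|\textbf{u}^n\|_1^2|e_q^{n+1}|^2$, i.e. an unbounded coefficient multiplying the error at the \emph{new} time level, to which Lemma \ref{lem: gronwall2} (whose right-hand side is $a_kd_k\Delta t_k$) does not apply; an ``implicit'' Gronwall would force a time-step restriction, contradicting the theorem. The paper resolves this with a specific device your proposal lacks: it takes the Young constant to be $\tfrac{1}{4k_2}$, sums up to the index $m^*$ at which $|e_q^{m+1}|$ attains its maximum, so that $\tfrac{1}{2k_2}|e_q^{m^*+1}|^2\Delta t\sum_n\|\textbf{u}^n\|_1^2\le\tfrac12|e_q^{m^*+1}|^2$ is absorbed into the left-hand side, and only then applies Gronwall and transfers the bound to all $m$ by maximality. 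Without this (or an equivalent bootstrap), your argument does not close without a condition on $\Delta t$.

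A second gap concerns the term $\Delta t\sum_{n=0}^m|d_te_q^{n+1}|^2\le C(\Delta t)^2$ in the statement. Keeping the increment term produced by testing with $e_q^{n+1}$ on the left only yields $\sum_n|e_q^{n+1}-e_q^n|^2=(\Delta t)^2\sum_n|d_te_q^{n+1}|^2\le C(\Delta t)^2$, i.e. $\Delta t\sum_n|d_te_q^{n+1}|^2\le C\Delta t$ — one power of $\Delta t$ short of the claim. The paper proves this component in a separate final step: it tests the $q$-error equation with $d_te_q^{n+1}$, and uses the already-established estimates (in particular $\Delta t\sum_n\|\nabla\tilde e_{\textbf{u}}^{n+1}\|^2\le C(\Delta t)^2$ and the uniform bound $\|\textbf{u}^{n+1}\|_1\le k_3$ that only becomes available \emph{after} the velocity error estimate) to conclude. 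Your proposal omits this step entirely, so as written it does not establish the full inequality \eqref{e_error_estimate_u_final}.
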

\begin{proof}
We shall follow the steps in the stability  proof of  Theorem \ref{thm_energy stability_first order}.

\noindent{\bf Step 1.} We start by establishing  an error equation  corresponding to \eqref{e_energy decay4}.  Let $\textbf{R}_{\textbf{u}}^{n+1}$ be the truncation error defined by 
%\begin{equation}\label{e_error1}\aligned
%&\frac{\textbf{u}(t^{n+1})-\textbf{u}(t^{n})}{\Delta t}-\nu\Delta\textbf{u}(t^{n+1})+\frac{q(t^{n+1})}{ \exp(  -\frac{t^{n+1}}{T} ) }(\textbf{u}(t^{n+1})\cdot \nabla)\textbf{u}(t^{n+1})\\
%&+\nabla p(t^{n+1})=\textbf{f}(t^{n+1})-\textbf{R}_{\textbf{u}}^{n+1}.\endaligned
%\end{equation}
%where $\textbf{R}_{\textbf{u}}^{n+1}$ is the integral residual of Taylor series, i.e.,
\begin{equation}\label{e_error2}
\aligned
\textbf{R}_{\textbf{u}}^{n+1}=\frac{\textbf{u}(t^{n+1})-\textbf{u}(t^{n})}{\Delta t}-\frac{\partial \textbf{u}(t^{n+1})}{\partial t}=\frac{1}{\Delta t}\int_{t^n}^{t^{n+1}}(t-t^n)\frac{\partial^2 \textbf{u}}{\partial t^2}dt.
\endaligned
\end{equation}
Subtracting \eqref{e_model_transform1} at $t^{n+1}$ from \eqref{e_model_semi1}, we obtain
\begin{equation}\label{e_error3}
\aligned
&\frac{\tilde{e}_{\textbf{u}}^{n+1}-e_{\textbf{u}}^n}{\Delta t}-\nu\Delta\tilde{e}_{\textbf{u}}^{n+1}
=\frac{q(t^{n+1})}{ \exp(  -\frac{t^{n+1}}{T} ) }(\textbf{u}(t^{n+1})\cdot \nabla)\textbf{u}(t^{n+1})\\
&\ \ \ \ \ 
-\frac{q^{n+1}}{ \exp(  -\frac{t^{n+1}}{T} )  }\textbf{u}^{n}\cdot \nabla\textbf{u}^{n}
-\nabla (p^n-p(t^{n+1}))+\textbf{R}_{\textbf{u}}^{n+1}.
\endaligned
\end{equation}
We obtain from \eqref{e_model_semi2} that
\begin{equation}\label{e_error4}
\aligned
&\frac{e_{\textbf{u}}^{n+1}-\tilde{e}_{\textbf{u}}^{n+1}}{\Delta t}+\nabla(p^{n+1}-p^n)=0.
\endaligned
\end{equation}
Taking the inner product of \eqref{e_error3} with $\tilde{e}_{\textbf{u}}^{n+1}$, we obtain
\begin{equation}\label{e_error5}
\aligned
&\frac{\|\tilde{e}_{\textbf{u}}^{n+1}\|^2-\|e_{\textbf{u}}^n\|^2}{2\Delta t}+\frac{\|\tilde{e}_{\textbf{u}}^{n+1}-e_{\textbf{u}}^n\|^2}{2\Delta t}+\nu\|\nabla\tilde{e}_{\textbf{u}}^{n+1}\|^2\\
=&\left(\frac{q(t^{n+1})}{ \exp(  -\frac{t^{n+1}}{T} ) }(\textbf{u}(t^{n+1})\cdot \nabla)\textbf{u}(t^{n+1})- \frac{q^{n+1}}{ \exp(  -\frac{t^{n+1}}{T} ) }\textbf{u}^{n}\cdot \nabla\textbf{u}^{n},\tilde{e}_{\textbf{u}}^{n+1}\right)\\
&-(\nabla(p^n-p(t^{n+1})),\tilde{e}_{\textbf{u}}^{n+1})+(\textbf{R}_{\textbf{u}}^{n+1},\tilde{e}_{\textbf{u}}^{n+1}).
\endaligned
\end{equation}
Taking the inner product of \eqref{e_error4} with $\frac{e_{\textbf{u}}^{n+1}+\tilde{e}_{\textbf{u}}^{n+1}}{2}$, we derive
\begin{equation}\label{e_error6}
\aligned
&\frac{\|e_{\textbf{u}}^{n+1}\|^2-\|\tilde{e}_{\textbf{u}}^{n+1}\|^2}{2\Delta t}+\frac{1}{2}\left(\nabla(p^{n+1}-p^n),\tilde{e}_{\textbf{u}}^{n+1}\right)=0.
\endaligned
\end{equation}
Adding \eqref{e_error5} and \eqref{e_error6}, we have
\begin{equation}\label{e_error7}
\aligned
&\frac{\|e_{\textbf{u}}^{n+1}\|^2-\|e_{\textbf{u}}^n\|^2}{2\Delta t}+\frac{\|\tilde{e}_{\textbf{u}}^{n+1}-e_{\textbf{u}}^n\|^2}{2\Delta t}+\nu\|\nabla\tilde{e}_{\textbf{u}}^{n+1}\|^2\\
=&\left(\frac{q(t^{n+1})}{ \exp(  -\frac{t^{n+1}}{T} ) }(\textbf{u}(t^{n+1})\cdot \nabla)\textbf{u}(t^{n+1})- \frac{q^{n+1}}{ \exp(  -\frac{t^{n+1}}{T} ) }\textbf{u}^{n}\cdot \nabla\textbf{u}^{n},\tilde{e}_{\textbf{u}}^{n+1}\right)\\
&-\frac{1}{2}\left(\nabla(p^{n+1}+p^n-2p(t^{n+1})),\tilde{e}_{\textbf{u}}^{n+1}\right)+(\textbf{R}_{\textbf{u}}^{n+1},\tilde{e}_{\textbf{u}}^{n+1}).
\endaligned
\end{equation}
For the first term on the right hand side of \eqref{e_error7}, we have
\begin{equation}\label{e_error8}
\aligned
&\left(\frac{q(t^{n+1})}{ \exp(  -\frac{t^{n+1}}{T} ) }(\textbf{u}(t^{n+1})\cdot \nabla)\textbf{u}(t^{n+1})- \frac{q^{n+1}}{ \exp(  -\frac{t^{n+1}}{T} ) }\textbf{u}^{n}\cdot \nabla\textbf{u}^{n},\tilde{e}_{\textbf{u}}^{n+1}\right)\\
=&\frac{q(t^{n+1}) }{ \exp(  -\frac{t^{n+1}}{T} ) }\left( ( \textbf{u}(t^{n+1})-\textbf{u}^{n} )\cdot \nabla \textbf{u}(t^{n+1}),\tilde{e}_{\textbf{u}}^{n+1}\right)\\
&+\frac{q(t^{n+1}) }{ \exp(  -\frac{t^{n+1}}{T} ) }\left(  \textbf{u}^{n}\cdot \nabla (\textbf{u}(t^{n+1})-\textbf{u}^{n} ),\tilde{e}_{\textbf{u}}^{n+1}\right)\\
&-\frac{e_q^{n+1}}{ \exp(  -\frac{t^{n+1}}{T} ) }\left((\textbf{u}^n \cdot \nabla)\textbf{u}^n,\tilde{e}_{\textbf{u}}^{n+1}\right).
\endaligned
\end{equation}
Thanks to \eqref{e_L2 norm of u and q} and  \eqref{e_estimate for trilinear form}, the first term on the right hand side of \eqref{e_error8} can be estimated by
\begin{equation}\label{e_error9}
\aligned
\frac{q(t^{n+1}) }{ \exp(  -\frac{t^{n+1}}{T} ) }&\left( ( \textbf{u}(t^{n+1})-\textbf{u}^{n} )\cdot \nabla \textbf{u}(t^{n+1}),\tilde{e}_{\textbf{u}}^{n+1}\right)\\
\leq &c_2(1+c_1)\exp(1) \| \textbf{u}(t^{n+1})-\textbf{u}^{n} \| \| \textbf{u}(t^{n+1})\|_2
\|\nabla \tilde{e}_{\textbf{u}}^{n+1} \| \\
\leq &\frac{\nu}{6} \|\nabla \tilde{e}_{\textbf{u}}^{n+1} \|^2+C \| \textbf{u}(t^{n+1})\|_2^2\|e_{\textbf{u}}^n\|^2+C \| \textbf{u}(t^{n+1})\|_2^2\Delta t\int_{t^n}^{t^{n+1}}\|\textbf{u}_t\|^2dt.
\endaligned
\end{equation}
Using Cauchy-Schwarz inequality and recalling \eqref{e_L2 norm of u and q}, the second term on the right hand side of \eqref{e_error8} can be bounded by
\begin{equation}\label{e_error10}
\aligned
\frac{ q(t^{n+1}) }{ \exp(  -\frac{t^{n+1}}{T} ) }&\left(  \textbf{u}^{n}\cdot \nabla (\textbf{u}(t^{n+1})-\textbf{u}^{n} ),\tilde{e}_{\textbf{u}}^{n+1}\right)\\
= &\frac{ q(t^{n+1}) }{ \exp(  -\frac{t^{n+1}}{T} ) }\left(  \textbf{u}^{n}\cdot \nabla (\textbf{u}(t^{n+1})-\textbf{u}(t^{n}) ),\tilde{e}_{\textbf{u}}^{n+1}\right)-\frac{ q(t^{n+1}) }{ \exp(  -\frac{t^{n+1}}{T} ) }\left( e_{\textbf{u}}^{n}\cdot \nabla e_{\textbf{u}}^n,\tilde{e}_{\textbf{u}}^{n+1}\right)\\
&+\frac{ q(t^{n+1}) }{ \exp(  -\frac{t^{n+1}}{T} ) }\left(  \textbf{u}(t^n)\cdot \nabla e_{\textbf{u}}^n,\tilde{e}_{\textbf{u}}^{n+1}\right)\\
\leq &c_2(1+c_1)\exp(1) \|\nabla \tilde{e}_{\textbf{u}}^{n+1} \| (\|\textbf{u}^{n}\|  \| \int_{t^n}^{t^{n+1}}\textbf{u}_tdt \|_2+\|e_{\textbf{u}}^n\| \| \textbf{u}(t^n)\|_2)\\
&+c_2(1+c_1)\exp(1) \|e_{\textbf{u}}^n\|^{1/2} \|e_{\textbf{u}}^n\|^{1/2}_1\|e_{\textbf{u}}^n\|^{1/2} \|e_{\textbf{u}}^n\|^{1/2}_1\| \nabla \tilde{e}_{\textbf{u}}^{n+1} \| \\
\leq & \frac{\nu}{6} \|\nabla \tilde{e}_{\textbf{u}}^{n+1} \|^2+C (\| \textbf{u}(t^n)\|_2^2+\|e_{\textbf{u}}^n\|^2_1)  \|e_{\textbf{u}}^n\|^2\\
&+C \Delta t \int_{t^n}^{t^{n+1}}\|\textbf{u}_t \|_2^2dt.
\endaligned
\end{equation}
Next we estimate the second term on the right hand side of \eqref{e_error7}. Recalling \eqref{e_error4}, we have
\begin{equation}\label{e_error12}
\aligned
-\frac{1}{2}&\left(\nabla(p^{n+1}+p^n-2p(t^{n+1})),\tilde{e}_{\textbf{u}}^{n+1}\right)=
-\frac{1}{2}\left(\nabla(e_p^{n+1}+e_p^n-p(t^{n+1})+p(t^n)),
\tilde{e}_{\textbf{u}}^{n+1}\right)\\
=&-\frac{1}{2}\left(\nabla(e_p^{n+1}+e_p^n-p(t^{n+1})+p(t^n)),e_{\textbf{u}}^{n+1}+\Delta t(\nabla(e_p^{n+1}-e_p^n)+\nabla(p(t^{n+1})-p(t^n))\right)\\
=&-\frac{\Delta t}{2}(\|\nabla e_p^{n+1}\|^2-\|\nabla e_p^n\|^2)
-\Delta t (\nabla(p(t^{n+1})-p(t^n)),\nabla e_p^n)\\
&+\frac{\Delta t}{2}\|\nabla(p(t^{n+1})-p(t^n))\|^2\\
\leq &-\frac{\Delta t}{2}(\|\nabla e_p^{n+1}\|^2-\|\nabla e_p^n\|^2)+(\Delta t)^2\|\nabla e_p^n\|^2\\
&+C\left(\Delta t+(\Delta t)^2\right)\int_{t^n}^{t^{n+1}}\|\nabla p_t(t)\|^2dt.
\endaligned
\end{equation}
For the last term on the right hand side of \eqref{e_error7}, we have 
\begin{equation}\label{e_error13}
\aligned
&(\textbf{R}_{\textbf{u}}^{n+1},\tilde{e}_{\textbf{u}}^{n+1})\leq \frac{\nu}{6} \|\nabla \tilde{e}_{\textbf{u}}^{n+1} \|^2+C \Delta t\int_{t^n}^{t^{n+1}}\|\textbf{u}_{tt}\|_{-1}^2dt.
\endaligned
\end{equation}
Finally,  combining \eqref{e_error7} with \eqref{e_error8}-\eqref{e_error13}, we obtain
\begin{equation}\label{e_error_estimate_u_L2}
\aligned
\frac{\|e_{\textbf{u}}^{n+1}\|^2-\|e_{\textbf{u}}^n\|^2}{2\Delta t}&+\frac{\|\tilde{e}_{\textbf{u}}^{n+1}-e_{\textbf{u}}^n\|^2}{2\Delta t}+\frac{\nu} {2} \|\nabla\tilde{e}_{\textbf{u}}^{n+1}\|^2+\frac{\Delta t}{2}(\|\nabla e_p^{n+1}\|^2-\|\nabla e_p^n\|^2)\\
\leq &-\frac{e_q^{n+1}}{ \exp(  -\frac{t^{n+1}}{T} ) }\left((\textbf{u}^n \cdot \nabla)\textbf{u}^n,\tilde{e}_{\textbf{u}}^{n+1}\right) + C (\| \textbf{u}(t^n)\|_2^2+\|e_{\textbf{u}}^n\|^2_1)  \|e_{\textbf{u}}^n\|^2 \\
&+(\Delta t)^2\|\nabla e_p^n\|^2+C \| \textbf{u}(t^{n+1})\|_2^2\Delta t\int_{t^n}^{t^{n+1}}\|\textbf{u}_t\|^2dt \\
&+C \Delta t \int_{t^n}^{t^{n+1}}\|\textbf{u}_t \|_2^2dt+C\Delta t\int_{t^n}^{t^{n+1}}\|\textbf{u}_{tt}\|_{-1}^2dt \\
&+C\left(\Delta t+(\Delta t)^2\right)\int_{t^n}^{t^{n+1}}\|\nabla p_t(t)\|^2dt,  \ \ \ \forall \ 0\leq m\leq N-1.
\endaligned
\end{equation}

\noindent{\bf Step 2.} Note that the first term on the right hand side can not be easily bounded. As in the stability proof, we shall   balanced it with a term from the error equation for $q$ corresponding to \eqref{e_energy decay6}. We proceed as follows.

Subtracting \eqref{e_model_transform2} from \eqref{e_model_semi4} leads to
\begin{equation}\label{e_error15}
\aligned
 &\frac{e_q^{n+1}-e_q^n}{\Delta t}+\frac{1}{T}e_q^{n+1}\\
 =&\frac{1}{\rm{exp}(-\frac{ t^{n+1} } {T})}
\left( (\textbf{u}^n\cdot\nabla \textbf{u}^n,\tilde{\textbf{u}}^{n+1})-
(\textbf{u}(t^{n+1})\cdot \nabla \textbf{u}(t^{n+1}),\textbf{u}(t^{n+1}) ) \right)+\textbf{R}_{q}^{n+1},
\endaligned
\end{equation}
where 
\begin{equation}\label{e_error16}
\aligned
\textbf{R}_{q}^{n+1}=\frac{q(t^{n+1})-q(t^{n})}{\Delta t}-\frac{\partial q(t^{n+1})}{\partial t}=\frac{1}{\Delta t}\int_{t^n}^{t^{n+1}}(t-t^n)\frac{\partial^2 q}{\partial t^2}dt.
\endaligned
\end{equation}
Multiplying both sides of \eqref{e_error15} by $e_q^{n+1}$ yields
\begin{equation}\label{e_error17}
\aligned
 &\frac{|e_q^{n+1}|^2-|e_q^n|^2}{2\Delta t}+\frac{|e_q^{n+1}-e_q^n|^2}{2\Delta t}+\frac{1}{T}|e_q^{n+1}|^2\\
 =& \frac{ e_q^{n+1} }{\rm{exp}( -\frac{ t^{n+1} } {T} )} (\textbf{u}^n\cdot\nabla \textbf{u}^n, \tilde{e}_{\textbf{u}}^{n+1} )-\frac{ e_q^{n+1} }{\rm{exp}( -\frac{ t^{n+1} } {T} )} \left( \textbf{u}^n\cdot\nabla ( 
 \textbf{u}(t^{n+1})-\textbf{u}^n), \textbf{u}(t^{n+1}) \right)\\
 &-\frac{ e_q^{n+1} }{\rm{exp}( -\frac{ t^{n+1} } {T} )} \left( ( 
 \textbf{u}(t^{n+1})-\textbf{u}^n)\cdot \nabla \textbf{u}(t^{n+1}), \textbf{u}(t^{n+1}) \right)+\textbf{R}_{q}^{n+1}e_q^{n+1},
\endaligned
\end{equation}
Recalling \eqref{e_skew-symmetric1} and \eqref{e_L2 norm of u and q}, the second term on the right hand side of \eqref{e_error17} can be bounded by
\begin{equation}\label{e_error19}
\aligned
 -\frac{ e_q^{n+1} }{\rm{exp}( -\frac{ t^{n+1} } {T} )} &\left( \textbf{u}^n\cdot\nabla ( 
 \textbf{u}(t^{n+1})-\textbf{u}^n), \textbf{u}(t^{n+1}) \right) \\
 \leq & c_2 \exp(1)\|\textbf{u}^n \|_1 \| \textbf{u}(t^{n+1})-\textbf{u}^n \|_0 \| \textbf{u}(t^{n+1}) \|_{2} |e_q^{n+1}| \\
 \leq & \frac{1}{4k_2} \|\textbf{u}^n \|_1^2 |e_q^{n+1}|^2 + C \| e_{\textbf{u}}^n \|^2
+C\Delta t\int_{t^n}^{t^{n+1} } \|\textbf{u}_t \|_0^2dt,
 \endaligned
\end{equation}
The third term on the right hand side of \eqref{e_error17} can be bounded by
\begin{equation}\label{e_error20}
\aligned
 -\frac{ e_q^{n+1} }{\rm{exp}( -\frac{ t^{n+1} } {T} )}& \left( ( 
 \textbf{u}(t^{n+1})-\textbf{u}^n)\cdot \nabla \textbf{u}(t^{n+1}), \textbf{u}(t^{n+1}) \right)\\
 \leq &c_2\exp(1) \| \textbf{u}(t^{n+1})-\textbf{u}^n \| \|\textbf{u}(t^{n+1}) \|_1 \|\textbf{u}(t^{n+1}) \|_2 |e_q^{n+1}| \\
 \leq & C\|e_{\textbf{u}}^{n}\|^2+\frac{1}{4 T}  |e_q^{n+1}|^2+C\Delta t\int_{t^n}^{t^{n+1} } \|\textbf{u}_t \|^2dt.
\endaligned
\end{equation}
For the last term on the right hand side of \eqref{e_error17}, we have
\begin{equation}\label{e_error21}
\aligned
\textbf{R}_{q}^{n+1}e_q^{n+1} \leq \frac{1}{4 T}  |e_q^{n+1}|^2+C\Delta t\int_{t^n}^{t^{n+1} } \|q_{tt} \|^2dt.
\endaligned
\end{equation}
Combining \eqref{e_error17} with \eqref{e_error19}-\eqref{e_error21} results in 
\begin{equation}\label{e_error22}
\aligned
 \frac{|e_q^{n+1}|^2-|e_q^n|^2}{2\Delta t}&+\frac{|e_q^{n+1}-e_q^n|^2}{2\Delta t}+\frac{1}{2T}|e_q^{n+1}|^2\\
 \leq & \frac{ e_q^{n+1} }{\rm{exp}( -\frac{ t^{n+1} } {T} )} (\textbf{u}^n\cdot\nabla \textbf{u}^n, \tilde{e}_{\textbf{u}}^{n+1} )
  + \frac{1}{4k_2}   \| \textbf{u}^n \|_1^{2}  |e_q^{n+1}|^2 + C\|e_{\textbf{u}}^{n}\|^2 \\
 &+C\Delta t\int_{t^n}^{t^{n+1} } \|\textbf{u}_t \|_0^2dt 
+C\Delta t\int_{t^n}^{t^{n+1} } \|q_{tt} \|^2dt.
\endaligned
\end{equation}
Note that the first term on the right hand side above is what we need to balance the first term on the right hand side of \eqref{e_error_estimate_u_L2}.

\noindent{\bf Step 3}.  Summing up \eqref{e_error22} with \eqref{e_error_estimate_u_L2} leads to
\begin{equation}\label{e_error22_R1}
\aligned
 & \frac{|e_q^{n+1}|^2-|e_q^n|^2}{2\Delta t}+\frac{|e_q^{n+1}-e_q^n|^2}{2\Delta t}+\frac{1}{2T}|e_q^{n+1}|^2 + \frac{\|e_{\textbf{u}}^{n+1}\|^2-\|e_{\textbf{u}}^n\|^2}{2\Delta t} \\
 & +\frac{\|\tilde{e}_{\textbf{u}}^{n+1}-e_{\textbf{u}}^n\|^2}{2\Delta t}+\frac{\nu} {2} \|\nabla\tilde{e}_{\textbf{u}}^{n+1}\|^2+\frac{\Delta t}{2}(\|\nabla e_p^{n+1}\|^2-\|\nabla e_p^n\|^2) \\
\leq & \frac{1}{4k_2}   \| \textbf{u}^n \|_1^{2}  |e_q^{n+1}|^2  + C (\| \textbf{u}(t^n)\|_2^2+\|e_{\textbf{u}}^n\|^2_1)  \|e_{\textbf{u}}^n\|^2 \\
&+(\Delta t)^2\|\nabla e_p^n\|^2+C  \| \textbf{u}(t^{n+1})\|_2^2\Delta t\int_{t^n}^{t^{n+1}}\|\textbf{u}_t\|^2dt \\
&+C \Delta t \int_{t^n}^{t^{n+1}}\|\textbf{u}_t \|_2^2dt+C\Delta t\int_{t^n}^{t^{n+1}}\|\textbf{u}_{tt}\|_{-1}^2dt \\
&+C\left(\Delta t+(\Delta t)^2\right)\int_{t^n}^{t^{n+1}}\|\nabla p_t(t)\|^2dt +C\Delta t\int_{t^n}^{t^{n+1} } \|q_{tt} \|^2dt.
\endaligned
\end{equation}
Multiplying \eqref{e_error22_R1} by $2\Delta t$ and summing over $n$, $n=0,2,\ldots,m^*$, where $m^*$ is the time step at which $|e_q^{m^*+1}|$ achieves its maximum value, we can obtain
\begin{equation}\label{e_error23}
\aligned
  |e_q^{m^*+1}|^2+& \frac{ \Delta t }{T} \sum\limits_{n=0}^{m^*}|e_q^{n+1}|^2 +
  \|e_{\textbf{u}}^{m^*+1}\|^2 + \nu \Delta t  \sum\limits_{n=0}^{m^*}  \|\nabla\tilde{e}_{\textbf{u}}^{n+1}\|^2 + (\Delta t)^2 \|\nabla e_p^{m^*+1}\|^2 \\
  \leq & \frac{1}{2k_2} |e_q^{m^*+1}|^2 \Delta t \sum\limits_{n=0}^{m^*}\| \textbf{u}^n \|_1^{2} +C \Delta t\sum\limits_{n=0}^{m^*} (\| \textbf{u}(t^n)\|_2^2+\|e_{\textbf{u}}^n\|^2_1)  \|e_{\textbf{u}}^n\|^2 \\
  &+(\Delta t)^3 \sum\limits_{n=0}^{m^*} \|\nabla e_p^n\|^2+C \| \textbf{u}(t^{n+1})\|_2^2 (\Delta t) ^2 \int_{t^0}^{t^{m^*+1}}\|\textbf{u}_t\|^2dt \\
&+C (\Delta t)^2 \int_{t^0}^{t^{m^*+1}}\|\textbf{u}_t \|_2^2dt+C (\Delta t)^2 \int_{t^0}^{t^{m^*+1}}\|\textbf{u}_{tt}\|_{-1}^2dt \\
&+C\left( (\Delta t)^2+(\Delta t)^3 \right)\int_{t^0}^{t^{m^*+1}}\|\nabla p_t(t)\|^2dt +C (\Delta t)^2 \int_{t^0}^{t^{m^*+1} } \|q_{tt} \|^2dt.   
\endaligned
\end{equation}
Thanks to  \eqref{e_L2H1 norm of u}, the first term on the right hand side is bounded by $\frac{1}{2} |e_q^{m^*+1}|^2$. Then, applying the discrete Gronwall lemma  \ref{lem: gronwall2}, we obtain
\begin{equation*}
\aligned
 &|e_q^{m^*+1}|^2+\Delta t\sum\limits_{n=0}^{m^*}|e_q^{n+1}|^2  +
  \|e_{\textbf{u}}^{m^*+1}\|^2 + \nu \Delta t  \sum\limits_{n=0}^{m^*}  \|\nabla\tilde{e}_{\textbf{u}}^{n+1}\|^2 + (\Delta t)^2 \|\nabla e_p^{m^*+1}\|^2 \\
&\ \ \ \ \ \ \ \ \ \ 
 \leq  C( \|\textbf{u}_t\|_{H^1(0,T;H^2( {\Omega}) )}^2+\|q\|_{H^2(0.T)}^2 )
  (\Delta t)^2.
\endaligned
\end{equation*}
Since $|e_q^{m^*+1}|=\max_{0\le m\le N-1}|e_q^{m+1}|$, the above  also implies
\begin{equation}\label{e_error24}
\aligned
 &|e_q^{m+1}|^2+\Delta t\sum\limits_{n=0}^{m}|e_q^{n+1}|^2  +
  \|e_{\textbf{u}}^{m+1}\|^2 + \nu \Delta t  \sum\limits_{n=0}^{m}  \|\nabla\tilde{e}_{\textbf{u}}^{n+1}\|^2 \\
& \ \ \ \ \ \  \leq  C( \|\textbf{u}_t\|_{H^1(0,T;H^2( {\Omega} ) )}^2+\|q\|_{H^2(0,T)}^2 )
  (\Delta t)^2, \quad \forall 0\le m\le N-1.
\endaligned
\end{equation}

Next multiplying both sides of \eqref{e_error15} with $d_te_q^{n+1}$ leads to
\begin{equation}\label{e_error_dtq1}
\aligned
 | d_te_q^{n+1}|^2&+ \frac{|e_q^{n+1}|^2-|e_q^n|^2}{2T\Delta t}+\frac{|e_q^{n+1}-e_q^n|^2}{2T\Delta t}\\
 =&\frac{ d_te_q^{n+1} }{\rm{exp}( -\frac{ t^{n+1} } {T} )} (\textbf{u}^n\cdot\nabla \textbf{u}^n, \tilde{e}_{\textbf{u}}^{n+1} )-\frac{ d_te_q^{n+1} }{\rm{exp}( -\frac{ t^{n+1} } {T} )} \left( \textbf{u}^n\cdot\nabla ( 
 \textbf{u}(t^{n+1})-\textbf{u}^n), \textbf{u}(t^{n+1}) \right)\\
 &-\frac{ d_te_q^{n+1} }{\rm{exp}( -\frac{ t^{n+1} } {T} )} \left( ( 
 \textbf{u}(t^{n+1})-\textbf{u}^n)\cdot \nabla \textbf{u}(t^{n+1}), \textbf{u}(t^{n+1}) \right)+\textbf{R}_{q}^{n+1}d_te_q^{n+1}.
\endaligned
\end{equation}
Thanks to \eqref{e_error24}, we have
\begin{equation}\label{e_error_dtq2}
\aligned
&\Delta t\sum\limits_{n=0}^m\|\nabla\tilde{e}_{\textbf{u}}^{n+1}\|^2 \leq C(\Delta t)^2,
\endaligned
\end{equation}
which implies that 
\begin{equation}\label{e_error_dtq3}
\aligned
&\| \textbf{u}^{n+1} \|_{1} \leq C\| \tilde{\textbf{u}}^{n+1} \|_{1} \leq C(\Delta t)^{1/2} \leq k_3.
\endaligned
\end{equation}
The above inequality holds thanks to the fact that \cite{temam2001navier} 
 $$ \| \textbf{u} ^{n+1} \|_{\textbf{H}^1(\Omega)}=
 \|P_{H} \tilde{ \textbf{u} }^{n+1} \|_{\textbf{H}^1(\Omega)} \leq c(\Omega)  \| \tilde{ \textbf{u} }^{n+1} \|_{\textbf{H}^1(\Omega)}.$$
Then the first term on the right hand side of \eqref{e_error_dtq1} can be estimated by 
\begin{equation}\label{e_error_dtq4}
\aligned
\frac{ d_te_q^{n+1} }{\rm{exp}( -\frac{ t^{n+1} } {T} )} &(\textbf{u}^n\cdot\nabla \textbf{u}^n, \tilde{e}_{\textbf{u}}^{n+1} ) \\
\leq &(1+c_1)c_2 \| \textbf{u}^n \|^{1/2}\| \textbf{u}^n \|_1^{1/2} \|\textbf{u}^n \|^{1/2}\| \textbf{u}^n \|_1^{1/2} \| \nabla \tilde{e}_{\textbf{u}}^{n+1} \| |d_te_q^{n+1}| \\
\leq &\frac{1}{6}|d_te_q^{n+1}|^2+C \| \nabla \tilde{e}_{\textbf{u}}^{n+1} \|^2.
\endaligned
\end{equation}
The second and third terms on the right hand side of \eqref{e_error_dtq1} can be bounded by
\begin{equation}\label{e_error_dtq5}
\aligned
-\frac{ d_te_q^{n+1} }{\rm{exp}( -\frac{ t^{n+1} } {T} )} &\left( \textbf{u}^n\cdot\nabla ( 
 \textbf{u}(t^{n+1})-\textbf{u}^n), \textbf{u}(t^{n+1}) \right)\\
 -\frac{ d_te_q^{n+1} }{\rm{exp}( -\frac{ t^{n+1} } {T} )} &\left( ( 
 \textbf{u}(t^{n+1})-\textbf{u}^n)\cdot \nabla \textbf{u}(t^{n+1}), \textbf{u}(t^{n+1}) \right)\\
\leq &\frac{1}{6}|d_te_q^{n+1}|^2+C \|\tilde{e}_{\textbf{u}}^{n+1} \|^2+C(\Delta t)^2.
\endaligned
\end{equation}
The last term on the right hand side of \eqref{e_error_dtq1} can be bounded by
\begin{equation}\label{e_error_dtq6}
\aligned
\textbf{R}_{q}^{n+1}d_te_q^{n+1} \leq \frac{1}{6} |d_te_q^{n+1}|^2+C\Delta t\int_{t^n}^{t^{n+1} } \|q_{tt} \|^2dt.
\endaligned
\end{equation}
Finally combining \eqref{e_error_dtq1} with \eqref{e_error_dtq2}-\eqref{e_error_dtq5}  results in 
\begin{equation}\label{e_error_dtq7}
\aligned
 &| d_te_q^{n+1}|^2+ \frac{|e_q^{n+1}|^2-|e_q^n|^2}{2T\Delta t}+\frac{|e_q^{n+1}-e_q^n|^2}{2T\Delta t}\\
 \leq &\frac{1}{2}|d_te_q^{n+1}|^2+C \| \nabla \tilde{e}_{\textbf{u}}^{n+1} \|^2+C \|\tilde{e}_{\textbf{u}}^{n+1} \|^2+C\Delta t\int_{t^n}^{t^{n+1} } \|q_{tt} \|^2dt+C(\Delta t)^2.
\endaligned
\end{equation}
Multiplying \eqref{e_error_dtq7} by $2T\Delta t$ and summing up for $n$ from $0$ to $m$, we  obtain
\begin{equation*}\label{e_error_dtq8}
\aligned
 T\Delta t\sum\limits_{n=0}^{m}&| d_te_q^{n+1}|^2+ |e_q^{m+1}|^2 \\
 \leq &C\Delta t\sum\limits_{n=0}^{m} \| \nabla \tilde{e}_{\textbf{u}}^{n+1} \|^2 +C \Delta t\sum\limits_{n=0}^{m}\|\tilde{e}_{\textbf{u}}^{n+1} \|^2 
 + C(\Delta t)^2.
\endaligned
\end{equation*}
Combining the above with \eqref{e_error24}, we  obtain the desired result.
\end{proof}

% ***********************************************************************************************
\subsection{Error estimates for the pressure}
The main result in this subsection is the following error estimate for the pressure which requires additional regularities.
\begin{theorem}\label{the: error_estimate_final3}
Assuming $\textbf{u}\in H^3(0,T;\textbf{L}^2(\Omega))\bigcap H^1(0,T;\textbf{H}^2_0(\Omega))\bigcap W^{1,\infty}(0,T;W^{1,\infty}(\Omega))$, $p\in H^2(0,T;H^1(\Omega))$, %and $\Delta t$ sufficiently small,
 then for the first-order  scheme \eqref{e_model_semi1}-\eqref{e_model_semi4}, 
 we have
\begin{equation}\label{e_error_p_L2}
\aligned
&\Delta t\sum\limits_{n=0}^m\|e_{p}^{n+1}\|^2_{L^2(\Omega)/R}  \leq C(\Delta t)^2,  \ \ \ \forall \ 0\leq m\leq N-1,
\endaligned
\end{equation}
where $C$ is a positive constant  independent of $\Delta t$.
\end{theorem}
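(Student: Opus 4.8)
The plan is to recover the pressure error from the momentum error equation by duality, using the inf-sup (LBB) stability of the continuous problem. First I would recall that, by the inf-sup condition for the Stokes problem on $\Omega$, there is a constant $\beta>0$ such that for each $n$,
\begin{equation*}
\|e_p^{n+1}\|_{L^2(\Omega)/\mathbb{R}} \leq \frac{1}{\beta}\sup_{\mathbf{v}\in\mathbf{H}^1_0(\Omega)}\frac{(e_p^{n+1},\nabla\cdot\mathbf{v})}{\|\nabla\mathbf{v}\|}.
\end{equation*}
So the task reduces to bounding $(e_p^{n+1},\nabla\cdot\mathbf{v}) = -(\nabla e_p^{n+1},\mathbf{v})$ for an arbitrary test function $\mathbf{v}\in\mathbf{H}^1_0(\Omega)$.

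Next I would extract an expression for $\nabla e_p^{n+1}$ from the scheme. Combining the viscous step \eqref{e_model_semi1} and the projection step \eqref{e_model_semi2}, and subtracting the PDE \eqref{e_model_transform1} at $t^{n+1}$, one obtains an identity of the form
\begin{equation*}
\nabla e_p^{n+1} = -\frac{\tilde e_{\mathbf{u}}^{n+1}-e_{\mathbf{u}}^n}{\Delta t} + \nu\Delta\tilde e_{\mathbf{u}}^{n+1} + N^{n+1} + \mathbf{R}_{\mathbf{u}}^{n+1} - \nabla\big(p(t^{n+1})-p^n\big) + \text{(lower order)},
\end{equation*}
where $N^{n+1}$ collects the nonlinear discrepancy $\frac{q(t^{n+1})}{\exp(-t^{n+1}/T)}(\mathbf{u}(t^{n+1})\cdot\nabla)\mathbf{u}(t^{n+1}) - \frac{q^{n+1}}{\exp(-t^{n+1}/T)}\mathbf{u}^n\cdot\nabla\mathbf{u}^n$, decomposed exactly as in \eqref{e_error8}. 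Pairing with $\mathbf{v}$ and integrating by parts on the viscous term, each contribution is controlled: the time-difference term by $\|\tilde e_{\mathbf{u}}^{n+1}-e_{\mathbf{u}}^n\|/\Delta t$ (whose square, summed, is $O(\Delta t)$ by Theorem \ref{the: error_estimate_final}), the viscous term by $\|\nabla\tilde e_{\mathbf{u}}^{n+1}\|$, the truncation terms by the usual $\int_{t^n}^{t^{n+1}}$-integral remainders, and the nonlinear term via the trilinear estimates \eqref{e_estimate for trilinear form} together with the a priori bounds \eqref{e_L2 norm of u and q}, \eqref{e_error_dtq3} and the already-established velocity estimate $\|e_{\mathbf{u}}^n\| + |e_q^{n+1}| \le C\Delta t$. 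Dividing by $\|\nabla\mathbf{v}\|$, taking the supremum, squaring, multiplying by $\Delta t$ and summing over $n$ then yields \eqref{e_error_p_L2}; the only subtlety is that the term $\|\tilde e_{\mathbf{u}}^{n+1}-e_{\mathbf{u}}^n\|/\Delta t$ appears to order $\Delta t^{-1}$, but $\sum_n \|\tilde e_{\mathbf{u}}^{n+1}-e_{\mathbf{u}}^n\|^2 \le C(\Delta t)^2$ from \eqref{e_error_estimate_u_final}, so $\Delta t\sum_n \big(\|\tilde e_{\mathbf{u}}^{n+1}-e_{\mathbf{u}}^n\|/\Delta t\big)^2 = (\Delta t)^{-1}\sum_n\|\tilde e_{\mathbf{u}}^{n+1}-e_{\mathbf{u}}^n\|^2 \le C\Delta t$, which is consistent with the claimed rate.

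The main obstacle is handling $\|\tilde e_{\mathbf{u}}^{n+1}-e_{\mathbf{u}}^n\|/\Delta t$ cleanly: one cannot bound it pointwise in $n$, only in the summed $\ell^2$ sense above, so the pressure estimate is necessarily of the averaged form \eqref{e_error_p_L2} rather than a pointwise-in-time bound — consistent with the well-known reduced accuracy of pressure-correction schemes for the pressure. A secondary technical point is that $\tilde e_{\mathbf{u}}^{n+1}$ does not vanish on $\partial\Omega$ only through $e_{\mathbf{u}}^n$; since $\tilde{\mathbf{u}}^{n+1}|_{\partial\Omega}=0$ and $\mathbf{u}(t^{n+1})|_{\partial\Omega}=0$, in fact $\tilde e_{\mathbf{u}}^{n+1}\in\mathbf{H}^1_0(\Omega)$, so the integration by parts $\langle\nabla e_p^{n+1},\mathbf{v}\rangle = -\langle e_p^{n+1},\nabla\cdot\mathbf{v}\rangle$ and the duality argument are legitimate, and the regularity hypotheses on $\mathbf{u}$ and $p$ are exactly what is needed to make all the truncation-error integrals finite.
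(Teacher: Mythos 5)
Your overall framework---recovering $\|e_p^{n+1}\|_{L^2(\Omega)/\mathbb{R}}$ by testing the momentum error equation against $\mathbf{v}\in\mathbf{H}^1_0(\Omega)$ and using the inf-sup/dual-norm characterization of the pressure---is the same as the paper's (cf.\ \eqref{e_error_p10}--\eqref{e_error_p12}). The genuine gap is in how you handle the discrete time-increment term, and this is exactly where the paper must work hardest. Your own computation gives
\begin{equation*}
\Delta t\sum_{n}\Bigl(\frac{\|\tilde e_{\mathbf{u}}^{n+1}-e_{\mathbf{u}}^n\|}{\Delta t}\Bigr)^2
=(\Delta t)^{-1}\sum_{n}\|\tilde e_{\mathbf{u}}^{n+1}-e_{\mathbf{u}}^n\|^2\leq C\Delta t,
\end{equation*}
and contrary to your closing remark this is \emph{not} consistent with the claimed rate: the theorem asserts $\Delta t\sum_n\|e_p^{n+1}\|^2_{L^2(\Omega)/\mathbb{R}}\leq C(\Delta t)^2$, whereas your argument delivers only $C\Delta t$, i.e.\ a half-order pressure estimate. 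Theorem \ref{the: error_estimate_final} by itself cannot give more, because it controls the velocity increments only in the summed $\ell^2$ sense; your observation that the increment ``cannot be bounded pointwise in $n$'' concedes precisely the estimate that must in fact be proved.

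The missing ingredient is the pointwise-in-$n$ bound $\|e_{\mathbf{u}}^{n+1}-e_{\mathbf{u}}^n\|\leq C(\Delta t)^2$, equivalently $\|d_te_{\mathbf{u}}^{n+1}\|\leq C\Delta t$, which the paper establishes before invoking duality. This is done by a second, auxiliary energy estimate: take the difference of two consecutive momentum error equations (apply $d_t$ to \eqref{e_error_p_add13}), test with $d_t\tilde e_{\mathbf{u}}^{n+1}$, estimate the differenced nonlinear terms $S_1,S_2,S_3$ in \eqref{e_error_p_add15}--\eqref{e_error_p_add17} using \eqref{e_estimate for trilinear form}, \eqref{e_L2 norm of u and q}, \eqref{e_error_dtq3} and the $d_te_q$ control from Theorem \ref{the: error_estimate_final}, absorb $\|\nabla d_t\tilde e_{\mathbf{u}}^{n}\|^2\leq C(\Delta t)^{-1}$ through the Gronwall factor, and treat the first-step quantities $\|d_te_{\mathbf{u}}^{1}\|$ and $\Delta t\|\nabla d_te_p^{1}\|$ separately, arriving at \eqref{e_error_p_add31} and \eqref{error_dtu}. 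Only with this extra estimate does the duality argument you outline (whose remaining terms---viscous, nonlinear, truncation---you do treat correctly) yield $\Delta t\sum_n\|e_p^{n+1}\|^2_{L^2(\Omega)/\mathbb{R}}\leq C(\Delta t)^2$. As written, your proposal proves a weaker, suboptimal bound rather than the stated theorem.
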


\begin{proof}
 In order to prove the above results, we need to first establish an estimate on $ \| e_{\textbf{u}}^{n+1}-e_{\textbf{u}}^n\|$.    

Adding \eqref{e_error3}  and \eqref{e_error4} leads to 
\begin{equation}\label{e_error_p_add13}
\aligned
\frac{e_{\textbf{u}}^{n+1}-e_{\textbf{u}}^n}{\Delta t}-\nu\Delta\tilde{e}_{\textbf{u}}^{n+1}
&=\frac{q(t^{n+1})}{ \exp( -\frac{ t^{n+1} } {T} ) }(\textbf{u}(t^{n+1})\cdot \nabla)\textbf{u}(t^{n+1})\\
& 
-\frac{q^{n+1}}{ \exp( -\frac{ t^{n+1} } {T}  ) }\textbf{u}^{n}\cdot \nabla\textbf{u}^{n}
-\nabla e_p^{n+1}+\textbf{R}_{\textbf{u}}^{n+1}.
\endaligned
\end{equation}
Define $d_{tt} e_{\textbf{u} }^{n+1}=\frac{d_t e_{\textbf{u} }^{n+1}-d_t e_{\textbf{u} }^n}{\Delta t}$. Then  taking the  difference of two consecutive steps in \eqref{e_error_p_add13}, we have
\begin{equation}\label{e_error_p_add14}
\aligned
&d_{tt} e_{\textbf{u} }^{n+1}-\nu\Delta d_t\tilde{e}_{\textbf{u}}^{n+1}
=d_t\textbf{R}_{\textbf{u}}^{n+1}-\nabla d_te_p^{n+1}+\sum\limits_{i=1}^{3} S_i,
\endaligned
\end{equation}
where 
\begin{equation}\label{e_error_p_add15}
\aligned
S_1=&d_t(q^{n+1}\exp(\frac{t^{n+1}}{T})) ( \textbf{u}(t^{n+1})-\textbf{u}^{n} )\cdot \nabla \textbf{u}(t^{n+1}) \\
&+q^n\exp(\frac{t^{n}}{T}) ( d_t\textbf{u}(t^{n+1})-d_t\textbf{u}^{n} )\cdot \nabla \textbf{u}(t^{n+1}) \\
&+q^n\exp(\frac{t^{n}}{T})  ( \textbf{u}(t^{n})-\textbf{u}^{n-1} )\cdot \nabla d_t\textbf{u}(t^{n+1}),
\endaligned
\end{equation}

\begin{equation}\label{e_error_p_add16}
\aligned
S_2=&d_t(q^{n+1}\exp(\frac{t^{n+1}}{T})) \textbf{u}^{n}\cdot \nabla (\textbf{u}(t^{n+1})-\textbf{u}^{n} )\\
&+q^n\exp(\frac{t^{n}}{T}) d_te_\textbf{u}^{n}\cdot \nabla (\textbf{u}(t^{n+1})-\textbf{u}^{n} )  \\
&+q^n\exp(\frac{t^{n}}{T}) d_t\textbf{u}(t^{n})\cdot \nabla (\textbf{u}(t^{n+1})-\textbf{u}^{n} )\\
&+q^n\exp(\frac{t^{n}}{T}) \textbf{u}^{n-1}\cdot \nabla (d_t\textbf{u}(t^{n+1})-d_t\textbf{u}^{n} ),
\endaligned
\end{equation}
 and  
\begin{equation}\label{e_error_p_add17}
\aligned
S_3=&-d_t(e_q^{n+1} \exp(\frac{t^{n+1}}{T}) ) \textbf{u}(t^{n+1})\cdot \nabla \textbf{u}(t^{n+1})\\
&-e_q^{n} \exp(\frac{t^{n}}{T}) d_t\textbf{u}(t^{n+1})\cdot \nabla \textbf{u}(t^{n+1}) \\
&-e_q^{n} \exp(\frac{t^{n}}{T})  \textbf{u}(t^{n})\cdot \nabla d_t\textbf{u}(t^{n+1}).
\endaligned
\end{equation}
Taking the inner product of \eqref{e_error_p_add14} with $d_t \tilde{e}_{\textbf{u} }^{n+1}$, we find
\begin{equation}\label{e_error_p_add18}
\aligned
&(d_{tt} e_{\textbf{u} }^{n+1},d_t \tilde{e}_{\textbf{u} }^{n+1}) +\nu \|\nabla d_t\tilde{e}_{\textbf{u}}^{n+1}\|^2
=(d_t\textbf{R}_{\textbf{u}}^{n+1},d_t \tilde{e}_{\textbf{u} }^{n+1}) \\
& \ \ \ \ \ \ \ 
-(\nabla d_te_p^{n+1},d_t \tilde{e}_{\textbf{u} }^{n+1})+\sum\limits_{i=1}^{3} (S_i,d_t \tilde{e}_{\textbf{u} }^{n+1} ).
\endaligned
\end{equation}
For the first term on the left hand side, we have
\begin{equation}\label{e_error_p_add20}
\aligned
&(d_{tt} e_{\textbf{u} }^{n+1},d_t \tilde{e}_{\textbf{u} }^{n+1})= 
\frac{ \|d_te_{\textbf{u} }^{n+1}\|^2- \|d_te_{\textbf{u} }^{n}\|^2}{2\Delta t}
+\frac{ \| d_te_{\textbf{u} }^{n+1}-d_te_{\textbf{u} }^n\|^2}{2\Delta t}.
\endaligned
\end{equation}
We bound the terms on the right hand side as follows.

\begin{equation}\label{e_error_p_add21}
\aligned
&(d_t\textbf{R}_{\textbf{u}}^{n+1},d_t \tilde{e}_{\textbf{u} }^{n+1}) \leq \frac{\nu}{8} \|\nabla d_t\tilde{e}_{\textbf{u}}^{n+1}\|^2+C\Delta t\int_{t^{n-1}}^{t^{n+1}} \| \textbf{u}_{ttt}(t) \|^2dt.
\endaligned
\end{equation}
The second term on the right hand of \eqref{e_error_p_add18} can be transformed into 
\begin{equation}\label{e_error_p_add22}
\aligned
&-(\nabla d_te_p^{n+1},d_t \tilde{e}_{\textbf{u} }^{n+1}) =-(\nabla d_te_p^{n},d_t \tilde{e}_{\textbf{u} }^{n+1})  - (\nabla ( d_te_p^{n+1}-d_te_p^{n} ),d_t \tilde{e}_{\textbf{u} }^{n+1}) .
\endaligned
\end{equation}
Since we can derive from \eqref{e_error4} that
\begin{equation}\label{e_error_p_add19}
\aligned
&d_t\tilde{e}_{\textbf{u}}^{n+1}=d_te_{\textbf{u}}^{n+1} +\nabla (p^{n+1}-2p^n+p^{n-1}).
\endaligned
\end{equation}
The first term on the right hand of \eqref{e_error_p_add22} can be estimated by
\begin{equation}\label{e_error_p_add22_R1}
\aligned
&-(\nabla d_te_p^{n},d_t \tilde{e}_{\textbf{u} }^{n+1}) =-(\nabla d_te_p^{n}, \nabla (p^{n+1}-2p^n+p^{n-1}) ) \\
=&-\Delta t(\nabla d_te_p^{n}, \nabla ( d_te_p^{n+1}-d_te_p^n ) )- (\Delta t)^2 \left( \nabla d_te_p^{n}, \frac{1}{ (\Delta t)^2}\nabla (p(t^{n+1})-2p(t^n)+p(t^{n-1}) ) \right) \\
\leq &-\frac {\Delta t} 2 ( \| \nabla d_te_p^{n+1} \|^2- \| \nabla d_te_p^n \|^2- \|\nabla d_te_p^{n+1}-\nabla d_te_p^n \|^2) \\
&+ (\Delta t)^2 \|\nabla d_te_p^{n} \|^2+C\Delta t \int_{t^{n-1}}^{t^{n+1}} \| \nabla p_{tt} \|^2dt.
\endaligned
\end{equation}
The second term on the right hand of \eqref{e_error_p_add22} can be bounded by
\begin{equation}\label{e_error_p_add22_R2}
\aligned
&-(\nabla ( d_te_p^{n+1}-d_te_p^{n} ),d_t \tilde{e}_{\textbf{u} }^{n+1}) =- \left( \nabla  ( d_te_p^{n+1}-d_te_p^{n} ) , \nabla (p^{n+1}-2p^n+p^{n-1}) \right) \\
=&- \Delta t \left(\nabla ( d_te_p^{n+1}-d_te_p^{n} ), \frac{1}{ \Delta t }\nabla (p(t^{n+1})-2p(t^n)+p(t^{n-1}) ) \right)  \\
&- \Delta t \left( \nabla ( d_te_p^{n+1}-d_te_p^{n} ), \nabla ( d_te_p^{n+1}-d_te_p^n ) \right) \\
\leq & -\frac {\Delta t} 2 \|\nabla d_te_p^{n+1}-\nabla d_te_p^n \|^2 +C (\Delta t)^2 \int_{t^{n-1}}^{t^{n+1}} \| \nabla p_{tt} \|^2dt.
\endaligned
\end{equation}
Recalling \eqref{e_estimate for trilinear form},  \eqref{e_L2 norm of u and q} and \eqref{e_error_dtq3} and using Young inequality, we have
\begin{equation}\label{e_error_p_add23}
\aligned
&(S_1, d_t \tilde{e}_{\textbf{u} }^{n+1}) \leq \frac{\nu}{8} \|\nabla d_t\tilde{e}_{\textbf{u}}^{n+1}\|^2+C\| d_te_{\textbf{u}}^n\|^2+C \| e_{\textbf{u}}^n \|^2+C \| e_{\textbf{u}}^{n-1} \|^2+C(\Delta t)^2,
\endaligned
\end{equation}

\begin{equation}\label{e_error_p_add24}
\aligned
(S_2, d_t \tilde{e}_{\textbf{u} }^{n+1}) \leq &C | \left( d_te_\textbf{u}^{n}\cdot \nabla e_{\textbf{u} }^n, d_t \tilde{e}_{\textbf{u} }^{n+1} \right) | +C | \left( \textbf{u}^{n-1} \cdot \nabla d_te_{\textbf{u} }^n, d_t \tilde{e}_{\textbf{u} }^{n+1} \right) | \\
& +  \frac{\nu}{32} \|\nabla d_t\tilde{e}_{\textbf{u}}^{n+1}\|^2+C \| e_{\textbf{u}}^n \|^2_1+C(\Delta t)^2 \\
 \leq &C | \left( d_te_\textbf{u}^{n}\cdot \nabla e_{\textbf{u} }^n, d_t \tilde{e}_{\textbf{u} }^{n+1} \right) | +C | \left( e_{ \textbf{u} }^{n-1} \cdot \nabla d_te_{\textbf{u} }^n, d_t \tilde{e}_{\textbf{u} }^{n+1} \right) | \\
& +  \frac{\nu}{16} \|\nabla d_t\tilde{e}_{\textbf{u}}^{n+1}\|^2+C \| e_{\textbf{u}}^n \|^2_1 +C\| d_te_{\textbf{u}}^n\|^2 +C(\Delta t)^2 \\
\leq & C \| d_te_\textbf{u}^{n} \|^{1/2} \| \nabla d_t \tilde{e}_\textbf{u}^{n} \|^{1/2} \|e_{\textbf{u} }^n\|^{1/2} \|e_{\textbf{u} }^n\|^{1/2}_1\| \nabla d_t \tilde{e}_{\textbf{u} }^{n+1} \| \\
&+C \| e_{ \textbf{u} }^{n-1} \|^{1/2} \| e_{ \textbf{u} }^{n-1} \|^{1/2}_1\| d_te_{\textbf{u} }^n\|^{1/2} \| \nabla d_t \tilde{e}_{\textbf{u} }^n\|^{1/2} \| \nabla d_t \tilde{e}_{\textbf{u} }^{n+1} \|    \\
& +  \frac{\nu}{16} \|\nabla d_t\tilde{e}_{\textbf{u}}^{n+1}\|^2+C \| e_{\textbf{u}}^n \|^2_1 +C\| d_te_{\textbf{u}}^n\|^2 +C(\Delta t)^2 \\
\leq &\frac{\nu}{8} \|\nabla d_t\tilde{e}_{\textbf{u}}^{n+1}\|^2+ C ( \| e_{ \textbf{u} }^{n-1} \|^{2}_0 \| e_{ \textbf{u} }^{n-1} \|^{2}_1+\| e_{ \textbf{u} }^{n} \|^{2}_0 \| e_{ \textbf{u} }^{n} \|^{2}_1 ) \|\nabla d_t\tilde{e}_{\textbf{u}}^{n}\|^2 \\
&+C\| d_te_{\textbf{u}}^n\|^2+C \| e_{\textbf{u}}^n \|^2_1+C(\Delta t)^2,
\endaligned
\end{equation}
and 
\begin{equation}\label{e_error_p_add25}
\aligned
&(S_3, d_t \tilde{e}_{\textbf{u} }^{n+1}) \leq \frac{\nu}{8} \|\nabla d_t\tilde{e}_{\textbf{u}}^{n+1}\|^2+C|d_te_q^{n+1}|^2+C|e_q^{n+1}|^2+C(\Delta t)^2.
\endaligned
\end{equation}
Then combining \eqref{e_error_p_add18} with \eqref{e_error_p_add19}-\eqref{e_error_p_add25}, we have
\begin{equation}\label{e_error_p_add26}
\aligned
&\frac{ \|d_te_{\textbf{u} }^{n+1}\|^2- \|d_te_{\textbf{u} }^{n}\|^2}{2\Delta t}
+\frac{ \| d_te_{\textbf{u} }^{n+1}-d_te_{\textbf{u} }^n\|^2}{2\Delta t} \\
&+\frac \nu 2 \|\nabla d_t\tilde{e}_{\textbf{u}}^{n+1}\|^2 +\frac {\Delta t} 2 ( \|\nabla d_te_p^{n+1} \|^2- \|\nabla d_te_p^n \|^2+ \|\nabla d_te_p^{n+1}-\nabla d_te_p^n \|^2) \\
\ \ \ \leq & (\Delta t)^2 \|\nabla d_te_p^{n} \|^2 + C ( \| e_{ \textbf{u} }^{n-1} \|^{2}_0 \| e_{ \textbf{u} }^{n-1} \|^{2}_1+\| e_{ \textbf{u} }^{n} \|^{2}_0 \| e_{ \textbf{u} }^{n} \|^{2}_1 ) \|\nabla d_t\tilde{e}_{\textbf{u}}^{n}\|^2 \\
&+C\| d_te_{\textbf{u}}^n\|^2+C \| e_{\textbf{u}}^n \|_1^2+C \| e_{\textbf{u}}^{n-1} \|^2 +C|d_te_q^{n+1}|^2 \\
&+C|e_q^{n+1}|^2+C(\Delta t)^2.
\endaligned
\end{equation}
Recalling Theorem \ref{the: error_estimate_final}, we have 
$$\|\nabla d_t\tilde{e}_{\textbf{u}}^{n}\|^2 \leq  (\Delta t)^{-2} \|\nabla \tilde{e}_{\textbf{u}}^{n}\|^2 \leq C (\Delta t)^{-1}, \ \forall 1\leq n\leq N.$$
Then multiplying \eqref{e_error_p_add26} by $2\Delta t$, summing up for $n$ from $1$ to $m$ and applying the discrete Gronwall lemma  \ref{lem: gronwall2}, we can obtain
\begin{equation}\label{e_error_p_add27}
\aligned
 &\|d_te_{\textbf{u} }^{m+1}\|^2+(\Delta t)^2 \|\nabla d_te_p^{m+1} \|^2 \\
 \leq &\|d_te_{\textbf{u} }^{1}\|^2+(\Delta t)^3 \sum\limits_{n=1}^m \|\nabla d_te_p^{n+1} \|^2+
 (\Delta t)^2 \|\nabla d_te_p^{1} \|^2
+ C(\Delta t)^2.
\endaligned
\end{equation}
It remains to estimate $\|d_te_{\textbf{u} }^{1}\|^2$ and $ (\Delta t)^2 \|\nabla d_te_p^{1} \|^2$. Using \eqref{e_error3}, we have 
\begin{equation}\label{e_error_p_add28}
\aligned
 \tilde{e}_{\textbf{u}}^{1}-\nu\Delta t \tilde{e}_{\textbf{u}}^1 =&\Delta t \frac{q(t^{1})}{ \exp(  -\frac{t^{1}}{T} ) }(\textbf{u}(t^{1})\cdot \nabla)\textbf{u}(t^{1})- \Delta t \frac{q^{1}}{ \exp(  -\frac{t^{1}}{T} )  }\textbf{u}^{0}\cdot \nabla\textbf{u}^{0} \\
 &-\Delta t\nabla (p^0-p(t^{1}))+\Delta t \textbf{R}_{\textbf{u}}^{1},
\endaligned
\end{equation}
Taking the inner product of \eqref{e_error_p_add28} with $\tilde{e}_{\textbf{u}}^1$ leads to
\begin{equation}\label{e_error_p_add29}
\aligned
 \| \tilde{e}_{\textbf{u}}^{1} \|^2+\nu\Delta t \| \nabla \tilde{e}_{\textbf{u}}^1 \|^2 
 =& \Delta t\left( \frac{q(t^{1})}{ \exp(  -\frac{t^{1}}{T} ) }(\textbf{u}(t^{1})\cdot \nabla)\textbf{u}(t^{1})- \frac{q^{1}}{ \exp(  -\frac{t^{1}}{T} )  }\textbf{u}^{0}\cdot \nabla\textbf{u}^{0}, \tilde{e}_{\textbf{u}}^1 \right) \\
 &-\Delta t (\nabla (p(t^0)-p(t^{1})), \tilde{e}_{\textbf{u}}^1)
 +\Delta t (\textbf{R}_{\textbf{u}}^{1}, \tilde{e}_{\textbf{u}}^1) \\
 \leq &\frac{1}{2}\| \tilde{e}_{\textbf{u}}^{1} \|^2+C(\Delta t)^4,
\endaligned
\end{equation}
from which we obtain 
$$\|d_te_{\textbf{u} }^{1}\|^2 \leq  \|d_t\tilde{e}_{\textbf{u} }^{1}\|^2 =(\Delta t)^{-2}\| \tilde{e}_{\textbf{u}}^{1} \|^2\leq C(\Delta t)^2.$$
We can derive from \eqref{e_error4} with $n=1$ that
\begin{equation}\label{e_error_p_add30}
\aligned
& (\Delta t)^2\|\nabla d_te_p^{1} \|^2 \leq (\Delta t)^{-2}( \| e_{\textbf{u}}^{1} \|^2+ \| \tilde{e}_{\textbf{u}}^{1} \|^2) + (\Delta t)^2\| \nabla d_tp(t^{1}) \|^2 \leq C (\Delta t)^2.
\endaligned
\end{equation}
Combing the above estimates into  \eqref{e_error_p_add27}, we finally obtain  
\begin{equation}\label{e_error_p_add31}
\aligned
 &\|d_te_{\textbf{u} }^{m+1}\|^2+(\Delta t)^2 \|\nabla d_te_p^{m+1} \|^2 \leq 
C(\Delta t)^2,
\endaligned
\end{equation}
which implies in particular
\begin{equation}\label{error_dtu}
 \| e_{\textbf{u}}^{n+1}-e_{\textbf{u}}^n\|\le C (\Delta t)^2.
 \end{equation}

We are now in position to prove the pressure estimate. 

\begin{equation}\label{e_error_p_add13******}
\aligned
\frac{e_{\textbf{u}}^{n+1}-e_{\textbf{u}}^n}{\Delta t}-\nu\Delta\tilde{e}_{\textbf{u}}^{n+1}
&=\frac{q(t^{n+1})}{ \exp( -\frac{ t^{n+1} } {T} ) }(\textbf{u}(t^{n+1})\cdot \nabla)\textbf{u}(t^{n+1})\\
& 
-\frac{q^{n+1}}{ \exp( -\frac{ t^{n+1} } {T}  ) }\textbf{u}^{n}\cdot \nabla\textbf{u}^{n}
-\nabla e_p^{n+1}+\textbf{R}_{\textbf{u}}^{n+1}.
\endaligned
\end{equation}

 Taking the inner product of \eqref{e_error_p_add13} with $\textbf{v}\in \textbf{H}^1_0(\Omega)$, we have
\begin{equation}\label{e_error_p10}
\aligned
&(\nabla e_p^{n+1},\textbf{v})=-(\frac{e_{\textbf{u}}^{n+1}-e_{\textbf{u}}^n}{\Delta t},\textbf{v})+\nu(\Delta\tilde{e}_{\textbf{u}}^{n+1},\textbf{v})+(\textbf{R}_{\textbf{u}}^{n+1},\textbf{v})\\
&+ \left( \frac{q(t^{n+1})}{ \exp(  -\frac{ t^{n+1} } {T} ) } \textbf{u}(t^{n+1})\cdot \nabla \textbf{u}(t^{n+1})-\frac{q^{n+1}}{ \exp(  -\frac{ t^{n+1} } {T} ) } \textbf{u}^{n}\cdot \nabla\textbf{u}^{n},\textbf{v}\right).
\endaligned
\end{equation}
Taking notice of the fact that 
\begin{equation}\label{e_error_p11}
\aligned
&\| p\|_{L^2(\Omega)/\mathbb{R}} \leq \sup_{\textbf{v} \in \textbf{H}^1_0(\Omega)} \frac{
(\nabla e_p^{n+1},\textbf{v}) }{ \|\nabla \textbf{v} \| }.
\endaligned
\end{equation} 
By using \eqref{e_error8}-\eqref{e_error10} and \eqref{e_error_dtq3}, we can derive that, for all $\textbf{v}\in \textbf{H}^1_0(\Omega)$, 
\begin{equation}\label{e_error_p10_R1}
\aligned
& \left( \frac{q(t^{n+1})}{ \exp(  -\frac{ t^{n+1} } {T} ) } \textbf{u}(t^{n+1})\cdot \nabla \textbf{u}(t^{n+1})-\frac{q^{n+1}}{ \exp(  -\frac{ t^{n+1} } {T} ) } \textbf{u}^{n}\cdot \nabla\textbf{u}^{n}, \textbf{v} \right) \\
=&\frac{q(t^{n+1}) }{ \exp(  -\frac{t^{n+1}}{T} ) }\left( ( \textbf{u}(t^{n+1})-\textbf{u}^{n} )\cdot \nabla \textbf{u}(t^{n+1}),  \textbf{v} \right)\\
&+\frac{q(t^{n+1}) }{ \exp(  -\frac{t^{n+1}}{T} ) }\left(  \textbf{u}^{n}\cdot \nabla (\textbf{u}(t^{n+1})-\textbf{u}^{n} ),  \textbf{v} \right)\\
&-\frac{e_q^{n+1}}{ \exp(  -\frac{t^{n+1}}{T} ) }\left((\textbf{u}^n \cdot \nabla)\textbf{u}^n,  \textbf{v} \right) \\
\leq & C(\|e_{ \textbf{u} }^{n}\| +\| \nabla \tilde{e}_{\textbf{u}}^n\| +\| \int_{t^n}^{t^{n+1}}\textbf{u}_tdt \|_0+ |e_q^{n+1}| )  \|\nabla \textbf{v} \| .
\endaligned
\end{equation}
Hence thanks to Theorem \ref{the: error_estimate_final} and \eqref{e_error13}, \eqref{e_error_p_add31}, we can derive from the above that 
\begin{equation}\label{e_error_p12}
\aligned
&\Delta t \sum\limits_{n=0}^m \|e_p^{n+1}\|^2_{L^2(\Omega)/\mathbb{R}} \leq 
C\Delta t \sum\limits_{n=0}^m \left( \| d_te_{\textbf{u}}^{n+1}\|^2+ \| \nabla \tilde{e}_{\textbf{u}}^{n+1} \|^2 \right. \\
&\ \ \ \ \ 
\left. +  \| \nabla \tilde{e}_{\textbf{u}}^{n} \|^2+\|e_{\textbf{u}}^n\|^2 + |e_q^{n+1}|^2 \right) \\
&\ \ \ \ \  +C (\Delta t)^2 \int_{t^0}^{t^{m+1}} (\|\textbf{u}_t\|_1^2+\|\textbf{u}_{tt}\|_{-1}^2 ) dt   \leq  C(\Delta t)^2.
\endaligned
\end{equation}
The proof  is complete.
\end{proof}

%==================================================================
 \section{Numerical experiments} 
In this section, we carry out some numerical experiments to verify the accuracy of the first- and second-order SAV schemes with pressure correction for the Navier-Stokes equations. In all examples below, we take $\Omega=(0,1)\times(0,1)$, $T=1$, $\mu=0.1$, and  the spatial discretization is based on the MAC scheme on the staggered grid with $N_x=N_y=250$ so that the spatial discretization error is negligible compared to the time discretization error for the time steps used in the experiments.

 \begin{table}[htbp]
\renewcommand{\arraystretch}{1.1}
\small
\centering
\caption{Errors and convergence rates for example 1 with the first-order scheme \eqref{e_model_semi1}-\eqref{e_model_semi4} }\label{table1_example1}
\begin{tabular}{p{1.2cm}p{1.5cm}p{1.5cm}p{1.5cm}p{1.5cm}p{1.5cm}p{1.5cm}}\hline
$\Delta t$    &$\|e_{\textbf{u}}\|_{l^{\infty}}$    &Rate &$\|e_p\|_{l^2}$   &Rate  
&$\|e_{q}\|_{\infty}$    &Rate   \\ \hline
$\frac{1}{10}$    &5.77E-3                & ---    &2.20E-2         &---  &2.26E-2         &---\\
$\frac{1}{20}$    &2.25E-3                &1.36    &1.06E-2         &1.06 &1.02E-2         &1.14\\
$\frac{1}{40}$    &1.04E-3                &1.11     &5.13E-3         &1.04 &4.87E-3         &1.07\\
$\frac{1}{80}$    &5.01E-4                &1.05    &2.54E-3         &1.01   &2.37E-3         &1.04\\
\hline
\end{tabular}
\end{table}

\begin{table}[htbp]
\renewcommand{\arraystretch}{1.1}
\small
\centering
\caption{Errors and convergence rates for example 1 with the second-order scheme \eqref{e_model_semi_second1}-\eqref{e_model_semi_second4} }\label{table2_example1}
\begin{tabular}{p{1.2cm}p{1.5cm}p{1.5cm}p{1.5cm}p{1.5cm}p{1.5cm}p{1.5cm}}\hline
$\Delta t$    &$\|e_{\textbf{u}}\|_{l^{\infty}}$    &Rate &$\|e_p\|_{l^2}$   &Rate  
&$\|e_{q}\|_{\infty}$    &Rate   \\ \hline
$\frac{1}{10}$    &1.99E-3                & ---    &7.83E-3         &---  &4.69E-3         &---\\
$\frac{1}{20}$    &5.25E-4                &1.92    &2.47E-3         &1.66 &1.24E-3         &1.92\\
$\frac{1}{40}$    &1.36E-4                &1.95     &7.20E-4         &1.78 &3.17E-4         &1.97\\
$\frac{1}{80}$    &3.95E-5                &1.78    &1.99E-4         &1.85   &7.97E-5         &1.99\\
\hline
\end{tabular}
\end{table}

\begin{table}[htbp]
\renewcommand{\arraystretch}{1.1}
\small
\centering
\caption{Errors and convergence rates for example 2 with the first-order scheme \eqref{e_model_semi1}-\eqref{e_model_semi4} }\label{table1_example2}
\begin{tabular}{p{1.2cm}p{1.5cm}p{1.5cm}p{1.5cm}p{1.5cm}p{1.5cm}p{1.5cm}}\hline
$\Delta t$    &$\|e_{\textbf{u}}\|_{l^{\infty}}$    &Rate &$\|e_p\|_{l^2}$   &Rate  
&$\|e_{q}\|_{\infty}$    &Rate   \\ \hline
$\frac{1}{10}$    &1.14E-2                & ---    &2.13E-2         &---  &2.03E-2         &---\\
$\frac{1}{20}$    &5.08E-3                &1.17    &1.07E-2         &0.99 &9.44E-3         &1.11\\
$\frac{1}{40}$    &2.46E-3                &1.05     &5.30E-3         &1.01 &4.61E-3         &1.03\\
$\frac{1}{80}$    &1.23E-3                &1.00    &2.63E-3         &1.01   &2.30E-3         &1.01\\
\hline
\end{tabular}
\end{table}

\begin{table}[htbp]
\renewcommand{\arraystretch}{1.1}
\small
\centering
\caption{Errors and convergence rates for example 2 with the second-order scheme \eqref{e_model_semi_second1}-\eqref{e_model_semi_second4} }\label{table2_example2}
\begin{tabular}{p{1.2cm}p{1.5cm}p{1.5cm}p{1.5cm}p{1.5cm}p{1.5cm}p{1.5cm}}\hline
$\Delta t$    &$\|e_{\textbf{u}}\|_{l^{\infty}}$    &Rate &$\|e_p\|_{l^2}$   &Rate  
&$\|e_{q}\|_{\infty}$    &Rate   \\ \hline
$\frac{1}{10}$    &3.95E-3                & ---    &5.95E-3         &---  &1.82E-3         &---\\
$\frac{1}{20}$    &1.06E-3                &1.90    &1.66E-3         &1.84 &4.09E-4         &2.16\\
$\frac{1}{40}$    &2.77E-4                &1.94     &4.51E-4         &1.88 &9.82E-5         &2.06\\
$\frac{1}{80}$    &8.09E-5                &1.78    &1.21E-4         &1.89   &2.42E-5         &2.02\\
\hline
\end{tabular}
\end{table}

{\bf Example 1}. The right hand side of the equations are computed according to the analytic solution given as below:\\
\begin{equation*}
\aligned
\begin{cases}
p(x,y,t)=\sin(t)(\sin(\pi y)-2/\pi),\\
u_1(x,y,t)= \sin(t)\sin^2(\pi x)\sin(2\pi y),\\
u_2(x,y,t)=- \sin(t)\sin(2\pi x)\sin^2(\pi y).
\end{cases}
\endaligned
\end{equation*}

{\bf Example 2}. The right hand side of the equations are computed according to the analytic solution given as below:\\
\begin{equation*}
\aligned
\begin{cases}
p(x,y,t)=t^2(x-0.5),\\
u_1(x,y,t)=-128t^2x^2(x-1)^2y(y-1)(2y-1),\\
u_2(x,y,t)=128t^2y^2(y-1)^2x(x-1)(2x-1).
\end{cases}
\endaligned
\end{equation*}
Numerical results for Examples 1 and 2 with first- and second-order schemes are presented in Tables \ref{table1_example1}-\ref{table2_example2}. We observe that the results for the first-order scheme are consistent with the error estimates in Theorems  \ref{the: error_estimate_final} and \ref{the: error_estimate_final3}. While second-order convergence rates for the velocity and SAV variable in $L^\infty$ norm, and nearly second-order convergence rates for the pressure in $L^2$ norm  were observed for the second-order scheme.

\begin{table}[htbp]
\renewcommand{\arraystretch}{1.1}
\small
\centering
\caption{Errors and convergence rates for example 1 with the first-order scheme \eqref{e_model_semi_original1}-\eqref{e_model_semi_original4} }\label{table3_example1}
\begin{tabular}{p{1.3cm}p{2.0cm}p{1.5cm}p{2.0cm}p{1.5cm}p{1.5cm}p{1.5cm}}\hline
$\Delta t$    &$\|e_{\textbf{u}}\|_{l^{\infty}}$    &Rate &$\|e_p\|_{l^2}$   &Rate   \\ \hline
$\frac{1}{10}$    &5.75E-3                & ---    &2.10E-2         &---  \\
$\frac{1}{20}$    &2.24E-3                &1.36    &9.55E-3         &1.13\\
$\frac{1}{40}$    &1.04E-3                &1.11     &4.46E-3         &1.10 \\
$\frac{1}{80}$    &5.01E-4                &1.05    &2.17E-3         &1.04 \\
\hline
\end{tabular}
\end{table}

\begin{table}[htbp]
\renewcommand{\arraystretch}{1.1}
\small
\centering
\caption{Errors and convergence rates for example 2 with the first-order scheme \eqref{e_model_semi_original1}-\eqref{e_model_semi_original4} }\label{table3_example2}
\begin{tabular}{p{1.3cm}p{2.0cm}p{1.5cm}p{2.0cm}p{1.5cm}p{1.5cm}p{1.5cm}}\hline
$\Delta t$    &$\|e_{\textbf{u}}\|_{l^{\infty}}$    &Rate &$\|e_p\|_{l^2}$   &Rate   \\ \hline
$\frac{1}{10}$    &1.14E-2                & ---    &1.88E-2         &---  \\
$\frac{1}{20}$    &5.05E-3                &1.17    &8.91E-3         &1.08\\
$\frac{1}{40}$    &2.44E-3                &1.05     &4.30E-3         &1.05 \\
$\frac{1}{80}$    &1.22E-3                &1.00    &2.11E-3         &1.03 \\
\hline
\end{tabular}
\end{table}

As a comparison, we also implemented the following pressure-correction version of the scheme \eqref{e_model_semi_original1}-\eqref{e_model_semi_original4}.

\textbf{Scheme \uppercase\expandafter{\romannumeral 3}:} Find ($\tilde{\textbf{u}}^{n+1}$, $\textbf{u}^{n+1}$,$p^{n+1}$,$q^{n+1}$) by solving
  \begin{numcases}{}
   \frac{\tilde{\textbf{u}}^{n+1}-\textbf{u}^{n}}{\Delta t}+\frac{q^{n+1}}{\sqrt{E(\textbf{u}^{n})+C_0}}\textbf{u}^{n}\cdot \nabla\textbf{u}^{n}
     -\nu\Delta\tilde{\textbf{u}}^{n+1}     +\nabla p^{n}=\textbf{f}^{n+1}, \ \ \tilde{\textbf{u}}^{n+1}|_{\partial \Omega}=0; \label{e_model_semi_original1b}\\
     \frac{\textbf{u}^{n+1}-\tilde{\textbf{u}}^{n+1}}{\Delta t}+\nabla(p^{n+1}-p^n)=0; \label{e_model_semi_original2b}\\
     \nabla\cdot\textbf{u}^{n+1}=0, \ \ \textbf{u}^{n+1}\cdot \textbf{n}|_{\partial \Omega}=0;
 \label{e_model_semi_original3b}\\ 
   2q^{n+1}\frac{q^{n+1}-q^n}{\Delta t}=(\frac{\textbf{u}^{n+1}-\textbf{u}^{n}}{\Delta t}+ 
   \frac{q^{n+1}}{\sqrt{E(\textbf{u}^{n})+C_0}}(\textbf{u}^n\cdot\nabla) \textbf{u}^n,\tilde{\textbf{u}}^{n+1}), \label{e_model_semi_original4b} 
\end{numcases}
where $E(\textbf{u})=\int_{\Omega}\frac{1}{2}|\textbf{u}|^2$ is the total energy. 
Numerical results with the MAC discretization of the above scheme is listed in  Tables \ref{table3_example1} and \ref{table3_example2}. 
It is  observed that the results with this scheme are essentially the same as the results by our new first-order SAV scheme  in Tables \ref{table1_example1} and \ref{table1_example2}. Note that the above scheme requires solving a nonlinear algebraic equation at each time step.

%==================================================================
\section{Concluding remarks}

We constructed novel first- and second-order linear and decoupled pressure correction  schemes based on the SAV approach for the Navier-Stokes equations, and proved that they are unconditionally energy stable. 
Compared with the previous version of SAV scheme \eqref{e_model_semi_original1}-\eqref{e_model_semi_original4}, the new schemes possess two distinct advantages: (i)  they are purely linear, {\color{black} eliminating the numerical and theoretical difficulties associated with the nonlinear algebraic equation in \eqref{e_model_semi_original4}, } and (ii) they lead to a much stronger stability result with a uniform bound on  the $L^2$-norm of the numerical solution, which is essential for the error analysis, and enable us to derive optimal error estimates for the first-order scheme without any restriction on the time step. Another main contribution is that we proved unconditional energy stability for the new SAV scheme based on the second-order rotational pressure-correction scheme. 
To the best of the authors' knowledge, these schemes are the first of such kind  for the Navier-Stokes equations with unconditional energy stability while treating the nonlinear term explicitly.

 We only carried out a rigorous error analysis for the  first-order scheme. Due to the rotational form of the pressure correction in the second-order scheme, its error analysis will be much more involved, as indicated by the technicality in its analysis without the nonlinear term \cite{guermond2004error}. We shall leave its analysis for a future endeavor. 
%==================================================================
% \section*{References}
\bibliographystyle{siamplain}
\bibliography{LM_Pre_Correct}

\end{document}